\newtheorem{theorem}{Theorem}[section]
\newtheorem{lemma}[theorem]{Lemma}
\newtheorem{proposition}[theorem]{Proposition}
\newtheorem{question}[theorem]{Question}
\numberwithin{equation}{section}
\newcommand{\R}         {\mathbb{R}}
\newcommand{\Z}         {\mathbb{Z}}
\newcommand{\N}         {\mathbb{N}}
\newcommand{\cH}         {\mathcal{H}}
\newcommand{\cE}         {\mathcal{E}}
\newcommand{\cF}         {\mathcal{F}}
\newcommand{\st}         {\mbox{ s.t. }}
\newcommand{\aand}         {\;{\mbox{ and }}\;}
\begin{document}

\title{On the Allen-Cahn
equation in the Grushin plane:\\
a monotone entire solution\\
that is not one-dimensional}

\author{\textit{Isabeau Birindelli}\ \ \& \ \ 
\textit{Enrico Valdinoci}}

\maketitle

\begin{abstract}
We consider
solutions of the
Allen-Cahn
equation in the whole Grushin plane
and we show that if they are monotone
in the vertical direction, then they are stable
and they satisfy a good energy estimate.

However, they are not necessarily
one-dimensional, as a counter-example shows. 
\end{abstract}

\section{Introduction}

We consider here the
Grushin plane $G$ (see~\cite{Grushin}),
that is $\R^2$ endowed 
with
the vector fields $X=\frac{\partial}{\partial x}$ and $Y
=x\frac{\partial}{\partial y}$.
We also define $T:=[X,Y]=\frac{\partial}{\partial y}$.

The Grushin gradient is then $\nabla_G:=(X,Y)$
(with the coordinates taken in the $(X,Y)$-frame)
and the Grushin Laplacian is $\Delta_G:=X^2+Y^2$.

We denote by $<\cdot,\cdot>$ the standard
scalar product
(when the vectors are taken in the $(X,Y)$-frame),
so that, for a smooth function $v$ we have
$$ |\nabla_G v(\zeta)| = \sqrt{<\nabla_G v(\zeta),
\nabla_G v(\zeta)>}=\sqrt{
(Xv(\zeta))^2+(Yv(\zeta))^2
},$$
for any $\zeta\in\R^2$.

Moreover, the Grushin norm
on $G$ is defined as
$$
\| (x,y)\|:=\sqrt[4]{|x|^4+4|y|^2}
$$
for any $(x,y)\in\R^2$, and then the Grushin ball 
of radius $R> 0$ centered at $\zeta \in\R^2$ is
$$ B_R (\zeta):= \{ \eta\in\R^2
\st \|\eta-\zeta\|<R\}.$$

The main purpose of this paper is to
study
solutions of the
Allen-Cahn
equation in the Grushin plane, that is
\begin{equation}\label{main pde}
\Delta_G u(\zeta) + f(u(\zeta))=0\end{equation}
for any $\zeta\in\R^2$.

We take, for simplicity, $f\in C^1$,
though less regularity is also possible to be dealt with.

A particular case of interest
is when $f=-W'$ and $W$ is a double-well potential.
Namely, through this paper, 
we denote by $W$ a function with the following properties:
$W\in C^{2} (\R)$ is
an even function for which $W(\pm 1)=0\leq W(r)$
for any $r\in\R$, $W''(0)\ne 0$, $W''(\pm 1)\ne0$,
and such that
\begin{equation}\label{UN}
{\mbox{
$W'(s)=0$ if and only if $s\in \{-1,0,+1\}$.
}}
\end{equation}
\bigskip

Inspired by a famous conjecture of De Giorgi (see
\cite{DG}), one may wonder under which conditions
the solutions of \eqref{main pde}
are one-dimensional, i.e., 
their level sets are straight lines and so,
up
to rotation, they depend on only one variable (at least when
$f=-W'$).

Natural requirements for such symmetry
are monotonicity and stability
conditions.
Namely,
if $u$ is a solution of \eqref{main pde},
we say that $u$ is stable if
\begin{equation}\label{Stab}
\int_{\R^2} |\nabla_G \phi(x)|^2-\int_{
{\mathcal{G}}_u}
f'(u(x)) \big(\phi(x)\big)^2
\,dx\ge0
\end{equation}
for any $\phi\in C^\infty_0(\R^2)$.

Stability is a natural condition in the calculus
of variation, since it states that the energy
functional associated to \eqref{main pde} has
non-negative second derivative. The stability condition
has thus been widely used in connection with the
problems posed by \cite{DG} (see, for instance, \cite{AAC, FSV}
and references therein).

Also, in the Euclidean setting, the stability condition
holds true whenever $u$ is monotone in some direction.
The analogy in the Grushin setting is somehow more
delicate, since the space is not homogeneous with respect
to the choice of a particular direction.

Thus, the monotonicity studied in this paper is the following.
We are mostly concerned with solutions that are monotone
in the $y$-direction, that is for which
\begin{equation}\label{Mo}
T u(\zeta)>0 \,{\mbox{for any}}\,
\zeta\in\R^2.
\end{equation}
We shall show that \eqref{Mo} implies~\eqref{Stab}
(see Proposition \ref{677888188} below).
\bigskip

Symmetry properties for
solutions of \eqref{main pde}
in the Grushin plane
have been recently studied
in \cite{VF}.
For instance, \cite{VF} pointed out the following result:

\begin{theorem}
Let us assume that $u$ is a stable solution of
\eqref{main pde}
in the whole $\R^2$ such that
$$ TYuXu(\zeta)-
TXuYu(\zeta)\leq 0\qquad{\mbox{for any $\zeta
\in\R^2$}}.
$$
Suppose that there exists $C_o\geq 1$ in such a way that
\begin{equation}\label{enesty}
\int_{B(0,R)}x^2| \nabla_G u|^2 \leq C_o R^4,
\end{equation}
for any $R\ge C_o$.

Assume also that
\begin{equation}\label{56789999900}
\nabla_G u (\zeta)\ne 0\,{\mbox{ for any }}\,\zeta\in\R^2.
\end{equation}
Then, $u$ depends only on the $x$-variable.
\end{theorem}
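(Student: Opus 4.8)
The plan is to run the classical geometric Poincar\'e inequality argument for De~Giorgi-type rigidity (in the spirit of Sternberg--Zumbrun and of Farina--Sciunzi--Valdinoci), adapted to the degenerate Grushin structure; the degeneracy of $Y$ on the line $\{x=0\}$ will, somewhat paradoxically, be what fixes the direction at the very end. First I would introduce the gradient components $v_1:=Xu$, $v_2:=Yu$ and the modulus $\sigma:=|\nabla_G u|=\sqrt{v_1^2+v_2^2}$, which by \eqref{56789999900} is strictly positive on all of $\R^2$ and hence as regular as elliptic theory for \eqref{main pde} allows. Differentiating \eqref{main pde} along $X$ and along $Y$ and using $[X,Y]=T=\partial_y$ (so that $X$ and $Y$ do \emph{not} commute with $\Delta_G$, which is the source of the extra terms absent in the Euclidean case), I would obtain the linearized system $\Delta_G v_1+f'(u)v_1=-2\,Tv_2$ and $\Delta_G v_2+f'(u)v_2=2\,Tv_1$. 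Multiplying these respectively by $v_1,v_2$, summing, and using the Leibniz rule for $\Delta_G$ then gives on $\R^2$
\begin{equation*}
\sigma\,\Delta_G\sigma=-f'(u)\,\sigma^2+2\big(v_2\,Tv_1-v_1\,Tv_2\big)+\big(|\nabla_G v_1|^2+|\nabla_G v_2|^2-|\nabla_G\sigma|^2\big).
\end{equation*}

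Next I would test the stability inequality \eqref{Stab} with $\phi:=\sigma\,\psi$, where $\psi$ is Lipschitz with compact support (\eqref{Stab} extends to such $\phi$ by density, and under \eqref{56789999900} the second integral in \eqref{Stab} runs over all of $\R^2$). Since $X=\partial_x$ and $Y=x\,\partial_y$ are divergence free, $\Delta_G$ is symmetric and an integration by parts gives $\int|\nabla_G\phi|^2=\int\sigma^2|\nabla_G\psi|^2-\int\sigma\,\Delta_G\sigma\,\psi^2$. Inserting the identity above, the $f'(u)$ terms cancel and one is left with
\begin{equation*}
2\int_{\R^2}\big(v_2\,Tv_1-v_1\,Tv_2\big)\psi^2+\int_{\R^2}\big(|\nabla_G v_1|^2+|\nabla_G v_2|^2-|\nabla_G\sigma|^2\big)\psi^2\le\int_{\R^2}\sigma^2|\nabla_G\psi|^2 .
\end{equation*}
The key observation is that \emph{both} integrands on the left are non-negative: the second because $\sigma^2\big(|\nabla_G v_1|^2+|\nabla_G v_2|^2-|\nabla_G\sigma|^2\big)=|v_1\nabla_G v_2-v_2\nabla_G v_1|^2$ (a Cauchy--Schwarz/Kato identity), and the first because, since $Tv_1=TXu$ and $Tv_2=TYu$, the hypothesis $TYu\,Xu-TXu\,Yu\le0$ is exactly $v_2\,Tv_1-v_1\,Tv_2\ge0$. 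This is precisely where the structural assumption of the theorem enters, namely to absorb the non-commutativity term.

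Then I would choose $\psi=\psi_R$, a logarithmic cut-off adapted to the Grushin metric: $\psi_R\equiv1$ on $B_R(0)$, $\psi_R\equiv0$ off $B_{R^2}(0)$, and $|\nabla_G\psi_R|^2\le C\,x^2\,\big((\log R)^2\,\|\zeta\|^4\big)^{-1}$ on the intervening annulus, using the elementary identity $|\nabla_G\|\zeta\||=|x|/\|\zeta\|$. Splitting $\{R\le\|\zeta\|\le R^2\}$ into the $\sim\log R$ dyadic Grushin shells $B_{2^{j+1}}(0)\setminus B_{2^j}(0)$ and applying \eqref{enesty} on each of them, I would get $\int_{\R^2}\sigma^2|\nabla_G\psi_R|^2\le C/\log R\to0$ as $R\to\infty$; hence the left-hand side of the last display vanishes, and, both its integrands being non-negative and continuous, they vanish identically: $v_1\nabla_G v_2=v_2\nabla_G v_1$ and $v_1\,Tv_2=v_2\,Tv_1$ on $\R^2$. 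Combining the $X$-component of the first identity with the second (recall $X=\partial_x$, $T=\partial_y$), this means $v_1\nabla v_2=v_2\nabla v_1$ for the ordinary gradient, i.e. the $C^1$ map $\zeta\mapsto(v_1,v_2)(\zeta)/\sigma(\zeta)\in S^1$ has vanishing differential, hence equals a constant $(\cos\theta_0,\sin\theta_0)$ on the connected plane. In particular $Yu=x\,u_y=\sigma\sin\theta_0$, and evaluating on $\{x=0\}$, where $Yu\equiv0$ while $\sigma>0$, forces $\sin\theta_0=0$; thus $Yu\equiv0$, so $u_y\equiv0$ for $x\ne0$ and, by continuity, everywhere, i.e. $u$ depends only on $x$.

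I expect the main obstacle to be the stability step: recognizing that the commutator term produced by differentiating \eqref{main pde} is controlled \emph{exactly} by the sign hypothesis of the theorem, together with the Cauchy--Schwarz defect term — this is what makes the stated assumptions the natural ones, and it is where the Grushin geometry genuinely departs from the Euclidean one. The remaining points are comparatively routine: the regularity needed to justify the identity for $\sigma\,\Delta_G\sigma$ and the integrations by parts (with $f\in C^1$ one has $u\in C^{2,\alpha}_{\mathrm{loc}}$ by subelliptic Schauder estimates, after which one either bootstraps slightly or, better, runs the stability computation directly against the weak formulation of the linearized equations, testing each with $v_i\psi^2$), and the construction and estimate of the Grushin logarithmic cut-off.
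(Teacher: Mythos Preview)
The paper does not prove this theorem: it is quoted from \cite{VF} and used only as background motivation, with no argument given in the present manuscript. There is therefore nothing in the paper to compare your proposal against.

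That said, your sketch is the correct Sternberg--Zumbrun/Farina--Sciunzi--Valdinoci argument adapted to the Grushin structure, and the computations check out: the linearized equations $\Delta_G v_1+f'(u)v_1=-2Tv_2$, $\Delta_G v_2+f'(u)v_2=2Tv_1$ are right (since $T$ commutes with both $X$ and $Y$), the resulting Bochner-type identity for $\sigma\Delta_G\sigma$ is correct, the sign hypothesis is indeed exactly $v_2Tv_1-v_1Tv_2\ge0$, the formula $|\nabla_G\|\zeta\||=|x|/\|\zeta\|$ is correct and makes the logarithmic cutoff interact with \eqref{enesty} precisely as you describe, and the final step---forcing $\sin\theta_0=0$ by evaluating $Yu=x\,u_y$ on $\{x=0\}$---is the natural way the degeneracy pins down the direction. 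This is almost certainly the strategy of \cite{VF} as well, so your proposal should be regarded as essentially the intended proof rather than an alternative.
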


We observe that \eqref{56789999900}
is also a
sort of monotonicity condition, while
\eqref{enesty} is an energy growth requirement
(and energy bounds are often needed in the Euclidean
case too, see \cite{AAC}).
We shall show that \eqref{Mo} implies also
\eqref{enesty}, at least when $f=-W'$
(see Theorem \ref{EB} below). \bigskip

This said, a natural question arises.
Namely, \begin{question}\label{QQQ}
Is it true that bounded solutions
of \eqref{main pde} which satisfy \eqref{Mo}
are one-dimensional (at least for $f=-W'$)?
\end{question}

Note that one may be quite tempted to answer yes to such
a question, since \eqref{Mo} implies both
the stability condition and the good energy growth
in \eqref{enesty} (again, see for this
Proposition \ref{677888188}
and Theorem \ref{EB} here below).

The main purpose of this paper is in fact to show
that the above question has a \emph{negative
answer}.

This will be accomplished in Theorem \ref{MMM},
by constructing a counter-example which follows the
lines of the one in \cite{BL}.
\bigskip

The paper is organized in the following way.
After gathering some elementary observations in
Section \ref{S1}, we 
point out in Section \ref{S0} that the monotonicity
condition in \eqref{Mo} implies the stability condition
in \eqref{Stab}.

Then, we develop in Section \ref{S2}
the energy estimates which show 
that the monotonicity
condition in \eqref{Mo} also implies
the energy growth in \eqref{enesty}.

Finally, Section \ref{S3} contains the construction
of the counter-example which shows that Question \ref{QQQ}
has a negative answer.

\section{Preliminaries}\label{S1}

We collect in this section some elementary, but
useful, observations. The expert reader may surely
skip this section.
 
\subsection{An integration by parts}

We now point out a variation of Green formula,
complicated here by the non-ho\-mo\-ge\-ne\-ous
Grushin scaling.

\begin{lemma}\label{Green}
Let $u\in \Lambda^2(\R^2)$, $v\in
\Lambda^1(\R^2)$. Suppose that $|\nabla_G u|\in
L^\infty(\R^2)$ and that
$v\ge 0$. 

Then
\begin{equation}\label{1ai889}
\begin{split}
& \int_{B_R(0)}\Big(<\nabla_G u,\nabla_G v>
+\Delta_G u v\Big)\\& \qquad\qquad \geq
-R^2\, \| \nabla_G u\|_{L^\infty(\R^2)}
\int_{\partial B_1(0)} v(RX, R^2Y)\,d\cH^1(X,Y).
\end{split}\end{equation}
\end{lemma}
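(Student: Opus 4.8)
The plan is to recognize that the integrand in \eqref{1ai889} is, once the Grushin operators are written in the $(x,y)$-coordinates, a plain Euclidean divergence, so that \eqref{1ai889} becomes a Gauss--Green identity plus an estimate of the resulting boundary term. Writing $\zeta=(x,y)$, let $V$ be the Euclidean vector field with components $V:=(\,v\,Xu\,,\ x\,v\,Yu\,)=(\,v\,u_x\,,\ x^2 v\,u_y\,)$. A one-line computation gives $\operatorname{div}V=<\nabla_G u,\nabla_G v>+\,v\,\Delta_G u$ (the terms $v_x u_x+x^2 v_y u_y$ reassemble into $<\nabla_G v,\nabla_G u>$, the remaining $v(u_{xx}+x^2u_{yy})$ is $v\,\Delta_G u$). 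The set $\partial B_R(0)=\{x^4+4y^2=R^4\}$ is a smooth curve not passing through $0$, the only critical point of $(x,y)\mapsto x^4+4y^2$, and the hypotheses $u\in\Lambda^2$, $v\in\Lambda^1$, $|\nabla_G u|\in L^\infty$ make $V$ and $\operatorname{div}V$ integrable on $\overline{B_R(0)}$; hence the divergence theorem yields
$$\int_{B_R(0)}\Big(<\nabla_G u,\nabla_G v>+\Delta_G u\,v\Big)=\int_{\partial B_R(0)}<V,\nu>\,d\cH^1,$$
with $\nu$ the outer Euclidean unit normal. (If one prefers to avoid any regularity discussion, one may instead integrate $\operatorname{div}(\psi_\varepsilon V)=0$ against a Lipschitz cut-off $\psi_\varepsilon$ approximating $\mathbf 1_{B_R(0)}$ and let $\varepsilon\to0$.)

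Next I would bound the boundary term from below. On $\partial B_R(0)$ one has $\nu=(x^3,2y)/\sqrt{x^6+4y^2}$, so
$$<V,\nu>=\frac{v}{\sqrt{x^6+4y^2}}\big(x^3 u_x+2x^2 y\,u_y\big)=\frac{x\,v}{\sqrt{x^6+4y^2}}\ <\,(x^2,2y)\,,\,\nabla_G u\,>.$$
The Cauchy--Schwarz inequality bounds $|<(x^2,2y),\nabla_G u>|$ by $\sqrt{x^4+4y^2}\,|\nabla_G u|=\|\zeta\|^2\,|\nabla_G u|=R^2\,|\nabla_G u|\le R^2\,\|\nabla_G u\|_{L^\infty(\R^2)}$ on $\partial B_R(0)$. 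Since $v\ge0$, this gives
$$\int_{B_R(0)}\Big(<\nabla_G u,\nabla_G v>+\Delta_G u\,v\Big)\ \ge\ -R^2\,\|\nabla_G u\|_{L^\infty(\R^2)}\int_{\partial B_R(0)}\frac{v\,|x|}{\sqrt{x^6+4y^2}}\,d\cH^1.$$

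Finally I would transport this last integral to $\partial B_1(0)$ by the anisotropic dilation $\delta_R(X,Y):=(RX,R^2Y)$, which maps $\partial B_1(0)$ onto $\partial B_R(0)$ and stretches arc length along the curve by the factor $R\sqrt{R^2 X^6+4Y^2}\big/\sqrt{X^6+4Y^2}$ (read off from the tangent direction $(-2Y,X^3)$ to $\partial B_1(0)$ and its image $(-2RY,R^2X^3)$). After the powers of $R$ cancel,
$$\int_{\partial B_R(0)}\frac{v\,|x|}{\sqrt{x^6+4y^2}}\,d\cH^1=\int_{\partial B_1(0)}v(RX,R^2Y)\,\frac{|X|}{\sqrt{X^6+4Y^2}}\,d\cH^1(X,Y).$$
On $\partial B_1(0)$ we have $4Y^2=1-X^4$ and $|X|\le1$, so $X^6+4Y^2=X^6-X^4+1$, and $X^6-X^4-X^2+1=(X^2-1)^2(X^2+1)\ge0$ shows $|X|\le\sqrt{X^6+4Y^2}$ there. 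Inserting this bound (and using $v\ge0$) into the identity above estimates the right-hand side by $\int_{\partial B_1(0)}v(RX,R^2Y)\,d\cH^1$, which combined with the previous display is exactly \eqref{1ai889}.

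I expect the divergence identity and the arc-length computation under $\delta_R$ to be routine bookkeeping; the one place that needs a small idea is the pointwise inequality $|X|\le\sqrt{X^6+4Y^2}$ on the unit Grushin circle, i.e. controlling the mismatch between the Euclidean normal/surface measure and the Grushin dilation structure — this is the (mild) main obstacle. An alternative that makes it transparent is to reduce first to $R=1$: replacing $(u,v)$ by $(u\circ\delta_R,v\circ\delta_R)$ and using $\nabla_G(u\circ\delta_R)=R\,(\nabla_G u)\circ\delta_R$, $\Delta_G(u\circ\delta_R)=R^2(\Delta_G u)\circ\delta_R$ and $\det D\delta_R=R^3$, the general case follows from the case $R=1$, where the two displays above apply with $R=1$ and the elementary inequality is used directly on $\partial B_1(0)$.
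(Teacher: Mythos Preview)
Your proof is correct and follows the same divergence-form/Gauss--Green strategy as the paper; the only real difference is the order of operations. The paper first rescales via $\delta_R$ to $B_1(0)$ and \emph{then} applies the Euclidean divergence theorem there, so the boundary estimate becomes almost trivial: one only needs $|\nu^E|=1$ together with $|X|\le 1$ on $\partial B_1(0)$ (which gives $X^4\le X^2$ in the norm of the boundary vector), and no arc-length Jacobian or polynomial identity $|X|\le\sqrt{X^6+4Y^2}$ is required. You in fact sketch exactly this alternative in your last paragraph; that is precisely the route the paper takes.
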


\begin{proof} We set $U(X,Y):=u(RX,R^2Y)$,
$V(X,Y):=v(RX,R^2Y)$.
Then, by changing variable, we have
\begin{eqnarray*}
&& \int_{B_R(0)}\Big(<\nabla_G u,\nabla_G v>
+\Delta_G u v\Big)\\
&=&
\int_{B_R(0)}\Big( \partial_x u(x,y)\partial_x v(x,y)
+x^2
\partial_y u(x,y)\partial_y v(x,y)
\\ &&
+ \partial_{xx}u(x,y) v(x,y)
+x^2 \partial_{yy}u(x,y) v(x,y)
\Big)\,dx\,dy\\
&=&\frac{1}{R^2}
\int_{B_R(0)}\Big( \partial_X U
\Big(\frac{x}{R},\frac{y}{R^2}\Big)
\partial_X V
\Big(\frac{x}{R},\frac{y}{R^2}\Big)
\\ &&+
\frac{x^2}{R^2}
\partial_Y U
\Big(\frac{x}{R},\frac{y}{R^2}\Big)
\partial_Y V
\Big(\frac{x}{R},\frac{y}{R^2}\Big)
+ \partial_{XX}U\Big(\frac{x}{R},\frac{y}{R^2}\Big)
V\Big(\frac{x}{R},\frac{y}{R^2}\Big)
\\&&+\frac{x^2}{R^2} \partial_{YY}U
\Big(\frac{x}{R},\frac{y}{R^2}\Big)V
\Big(\frac{x}{R},\frac{y}{R^2}\Big)
\Big)\,dx\,dy
\\ &=& R\int_{B_1(0)}
\partial_X U(X,Y)\partial_X V(X,Y)+X^2
\partial_Y U(X,Y)\partial_Y V(X,Y)\\&&+
\partial_{XX} U(X,Y)V(X,Y)+
X^2 
\partial_{YY} U(X,Y) V(X,Y)\,dX\,dY.
\end{eqnarray*}
Thence, by the standard Euclidean Divergence Theorem,
\begin{eqnarray}\label{8uauiiiq}
\nonumber
&& \int_{B_R(0)}\Big(<\nabla_G u,\nabla_G v>
+\Delta_G u v\Big)\\
&=& R \int_{B_1(0)}
{\rm div}\, \Big[V(X,Y) \Big( 
\partial_X U(X,Y),
X^2 \partial_Y U(X,Y)\Big) \Big]
\,dX\,dY\\
&=& R
\int_{\partial B_1(0)}
V(X,Y) \Big( 
\partial_X U(X,Y),
X^2 \partial_Y U(X,Y)\Big) \cdot
\nu^E (X,Y)
\,d\cH^1(X,Y),\nonumber
\end{eqnarray}
where 
``$\cdot$'' denotes the standard Euclidean scalar product
and ``$\nu^E$'' is the standard Euclidean outward normal
of $\partial B_1(0)$.

We write \eqref{8uauiiiq} as
\begin{eqnarray*}
&& \int_{B_R(0)}\Big(<\nabla_G u,\nabla_G v>
+\Delta_G u v\Big)\\
&=& R^2
\int_{\partial B_1(0)}
v(RX,R^2 Y) \Big( 
\partial_x u(R X,R^2 Y),
R X^2 \partial_y u(RX,R^2Y)\Big)\\&&
\qquad\qquad \cdot
\nu^E (X,Y)
\,d\cH^1(X,Y)
\end{eqnarray*}
and so, since the Euclidean norm of $\nu^E$ is $1$,
\begin{equation}\label{ai889}
\begin{split}
&\int_{B_R(0)}\Big(<\nabla_G u,\nabla_G v>
+\Delta_G u v\Big)\\
&\qquad \geq - R^2
\int_{\partial B_1(0)}
v(RX,R^2 Y) \Big| \Big(
\partial_x u(R X,R^2 Y),
R X^2 \partial_y u(RX,R^2Y)\Big)
\Big|_E
\,d\cH^1(X,Y),
\end{split}\end{equation}
where $|\cdot|_E$ is the Euclidean norm.

We now observe that, for any $(X,Y)\in\partial B_1(0)$,
we have $|X|\le 1$ and
\begin{eqnarray*}
&& \Big| \Big(
\partial_x u(R X,R^2 Y),
R X^2 \partial_y u(RX,R^2Y)\Big)\Big|_E^2
\\ &=&
\Big( 
\partial_x u(R X,R^2 Y)\Big)^2+
R^2 X^4 \Big(\partial_y u(RX,R^2Y)\Big)^2
\\ &\le&
\Big( 
\partial_x u(R X,R^2 Y)\Big)^2+
R^2 X^2 \Big(\partial_y u(RX,R^2Y)\Big)^2
\\ &=&
|\nabla_G u (RX,R^2 Y)|\\ &\leq&
\| \nabla_G u\|_{L^\infty(\R^2)}
.
\end{eqnarray*}
{F}rom this and \eqref{ai889} we get \eqref{1ai889}.
\end{proof}

\subsection{An interpolation inequality}

We point out the following elementary estimate:

\begin{lemma}
Let $h\in C^2 (\R)$.
Then,
\begin{equation}\label{Interpo}
\| h' \|_{L^\infty (\R)}\le
2\Big(\| h \|_{L^\infty (\R)}+
\| h'' \|_{L^\infty (\R)} \Big).\end{equation}
\end{lemma}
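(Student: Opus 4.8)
The plan is to prove the interpolation inequality \eqref{Interpo} by a standard Taylor-expansion argument applied on an interval of cleverly chosen length. Fix $x_0\in\R$ and a parameter $\delta>0$ to be optimized later. By Taylor's theorem with Lagrange remainder, for each sign $\pm$ there is a point $\xi_\pm$ between $x_0$ and $x_0\pm\delta$ such that
\[
h(x_0\pm\delta)=h(x_0)\pm\delta\, h'(x_0)+\frac{\delta^2}{2}h''(\xi_\pm).
\]
Subtracting the ``$-$'' identity from the ``$+$'' identity eliminates the value $h(x_0)$ and yields
\[
2\delta\, h'(x_0)=h(x_0+\delta)-h(x_0-\delta)-\frac{\delta^2}{2}\bigl(h''(\xi_+)-h''(\xi_-)\bigr),
\]
so that, after taking absolute values and using the triangle inequality,
\[
|h'(x_0)|\le\frac{1}{\delta}\,\|h\|_{L^\infty(\R)}+\frac{\delta}{2}\,\|h''\|_{L^\infty(\R)}.
\]

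The next step is to choose $\delta$. Taking $\delta=1$ gives immediately $|h'(x_0)|\le\|h\|_{L^\infty(\R)}+\tfrac12\|h''\|_{L^\infty(\R)}$, which is already far stronger than the claimed bound with its factor of $2$; thus any admissible choice (e.g.\ $\delta=2$, giving $|h'(x_0)|\le\tfrac12\|h\|_\infty+\|h''\|_\infty$) works comfortably. Since $x_0$ was arbitrary, taking the supremum over $x_0\in\R$ yields \eqref{Interpo}. One should note that the argument tacitly assumes $\|h\|_{L^\infty(\R)}$ and $\|h''\|_{L^\infty(\R)}$ are finite; if either is infinite the right-hand side of \eqref{Interpo} is $+\infty$ and there is nothing to prove, so we may assume both are finite, and then the displayed estimate shows in particular that $\|h'\|_{L^\infty(\R)}<\infty$ as well.

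There is essentially no obstacle here: the only mild point of care is making sure the Taylor remainder is controlled by $\|h''\|_{L^\infty(\R)}$ rather than by a pointwise value, which is automatic once we pass to the sup norm, and the slack in the constant means we need not optimize $\delta$ at all. An alternative, equally short route uses the mean value theorem on $[x_0,x_0+1]$ to find a point where $|h'|\le 2\|h\|_\infty$, then integrates $h''$ from that point to $x_0$ to pick up the $\|h''\|_\infty$ term; this also lands on the stated inequality (indeed with constant $1$ in front of each norm on an interval of length $1$), but the symmetric-Taylor version above is cleaner and self-contained.
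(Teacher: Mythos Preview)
Your proof is correct. The symmetric Taylor expansion at step size~$\delta$ cleanly yields $|h'(x_0)|\le \delta^{-1}\|h\|_{L^\infty}+\tfrac{\delta}{2}\|h''\|_{L^\infty}$, and $\delta=1$ already gives a sharper constant than required.

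The paper takes a different but equally elementary route---in fact, precisely the ``alternative'' you sketch at the end. It partitions~$\R$ into unit intervals $[j,j+1]$, argues by contradiction (via the Fundamental Theorem of Calculus) that each such interval must contain a point $t_j$ with $|h'(t_j)|\le 2\|h\|_{L^\infty}$, and then bounds $|h'(t)|$ for $t\in[j,j+1]$ by $|h'(t_j)|+\int_{t_j}^t |h''|\le 2\|h\|_{L^\infty}+\|h''\|_{L^\infty}$. Your Taylor argument is slightly more direct (no partition, no contradiction step) and yields the tighter bound $\|h\|_{L^\infty}+\tfrac12\|h''\|_{L^\infty}$; the paper's approach has the minor virtue of making the role of the interval length completely explicit and of being the classical Landau--Kolmogorov style argument. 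Both handle the trivial case of an infinite right-hand side the same way.
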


\begin{proof} We may assume that both~$\| h \|_{L^\infty (\R)}$
and~$\| 
h'' \|_{L^\infty (\R)}$ are finite, otherwise~\eqref{Interpo}
is void.

First, we observe that, for any~$j\in\Z$,
there exists~$t_j \in [j,j+1]$ in such a way that
\begin{equation}\label{st1}
|h'(t_j)|\le 2\| h \|_{L^\infty (\R)}.
\end{equation}
Indeed, if~\eqref{st1} were false, there would exist~$j_o
\in\Z$ such that~$|h'(t)|> 2\| h \|_{L^\infty (\R)}$
for any~$t\in [j_o, j_o+1]$. Since~$h'$
is continuous, this means that either~$h'(t)>
2\| h \|_{L^\infty 
(\R)}$ or~$
h'(t)<- 2\| h \|_{L^\infty (\R)}$
for any~$t\in [j_o,j_o+1]$. We assume that
the second possibility holds (the first case
is analogous). Then,
\begin{eqnarray*}&&
-2\| h \|_{L^\infty (\R)}
\leq h(j_o+1)-h(j_o)=
\int_{j_o}^{j_o+1} h'(t)\,dt \\&&
\qquad <
\int_{j_o}^{j_o+1} 
\big(-
2\| h \|_{L^\infty (\R)}\big)
\,dt=-2\| h \|_{L^\infty (\R)}.
\end{eqnarray*}
This contradiction proves~\eqref{st1}.

Then, making use of~\eqref{st1},
given any~$j\in\Z$ and any~$t\in [j,j+1]$,
\begin{equation*}\begin{split}
&|h'(t)|\le |h'(t_j)|+\left| \int_{t_j}^t
h''(s)\,ds\right|\\
&\qquad\le
2\| h \|_{L^\infty (\R)}+
\| h '\|_{L^\infty (\R)} |t-t_j|\le
2\| h \|_{L^\infty (\R)}+
\| h' \|_{L^\infty (\R)}.
\qedhere
\end{split}\end{equation*}
\end{proof}

\subsection{ODE analysis}\label{odes}

The scope of this section is an elementary analysis
of the solutions $h\in C^2(\R)$ of
\begin{equation}\label{in2} 
h''(t)=W'(h(t)) \qquad\qquad{\mbox{for
any $t\in\R$.}}\end{equation}

%\begin{lemma}\label{intepri}
Recall that for any any $C^2$ solution of (\ref{in2}) and any
 $s$, $t\in\R$, 
 \begin{equation} \label{intepri} \frac{|h'(s)|^2}{2}-W(h(s))=
\frac{|h'(t)|^2}{2}-W(h(t)).
\end{equation}
%\end{lemma}

%\begin{proof} {F}rom \eqref{in2},
%\begin{equation*}
%\frac{d}{dt}\left( 
%\frac{|h'(t)|^2}{2}-W(h(t))
%\right)= h'(t)\,\Big( h''(t)-W'(h(t))\Big)=0.
%\qedhere\end{equation*}
%\end{proof}
Furthermore
\begin{lemma}
Let $h$ be bounded. Then,
\begin{equation}\label{17889}
\| h\|_{C^2(\R)} \leq C,\end{equation}
for a suitable $C>0$, possibly depending on $\|h\|_{
L^\infty(\R)}$.

Also, for any $t\in\R$,
\begin{equation}\label{7891}
-W\Big(\inf_\R h\Big)=-W\Big(\sup_\R h\Big)=
\frac{|h'(t)|^2}{2}-W(h(t)).
\end{equation}
\end{lemma}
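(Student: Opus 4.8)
The plan is to prove the two assertions in turn, with the bound \eqref{17889} coming first since the conservation law \eqref{7891} will be extracted from \eqref{intepri} by a limiting argument that uses the $C^2$-bound.

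For \eqref{17889}: from the ODE \eqref{in2} we immediately get $\|h''\|_{L^\infty(\R)} = \|W'(h)\|_{L^\infty(\R)} \le \max_{|r|\le \|h\|_{L^\infty(\R)}} |W'(r)| =: C_1$, which is finite since $W'\in C^1$ and $h$ is bounded; this $C_1$ depends only on $\|h\|_{L^\infty(\R)}$. Applying the interpolation inequality \eqref{Interpo} then gives $\|h'\|_{L^\infty(\R)}\le 2(\|h\|_{L^\infty(\R)}+C_1)$, which is again controlled by $\|h\|_{L^\infty(\R)}$ alone. Combining the bounds on $h$, $h'$, $h''$ yields \eqref{17889}.

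For \eqref{7891}: the energy $E(t):=\frac{|h'(t)|^2}{2}-W(h(t))$ is constant in $t$ by \eqref{intepri}; call its common value $E_0$. I want to identify $E_0$ with $-W(\inf_\R h)$ and with $-W(\sup_\R h)$. Let $m:=\inf_\R h$ and $M:=\sup_\R h$; these are finite by boundedness. Pick a sequence $t_k$ with $h(t_k)\to m$. By the now-established uniform bound $\|h'\|_{L^\infty(\R)}\le C$, the sequence $h'(t_k)$ is bounded, so along a subsequence $h'(t_k)\to \ell$ for some $\ell\in\R$; passing to the limit in $E(t_k)=E_0$ and using continuity of $W$ gives $\frac{\ell^2}{2}-W(m)=E_0$, i.e. $E_0 + W(m) = \frac{\ell^2}{2}\ge 0$. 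The key point is to show $\ell=0$: since $h$ attains values arbitrarily close to its infimum $m$, if $\ell\ne 0$ then near such points $h$ would be strictly monotone with slope bounded away from $0$, forcing $h$ to drop below $m$ on one side — contradicting that $m$ is the infimum. (More carefully: if $\ell<0$, say, then for $k$ large $h'(t_k)<\ell/2<0$ and, by the uniform bound on $h''$, $h'(t)<\ell/4$ for all $t$ in a fixed-length interval to the right of $t_k$; integrating pushes $h$ strictly below $h(t_k)$, and choosing $k$ so that $h(t_k)$ is within, say, $|\ell|^2/100$ of $m$ gives $h<m$, a contradiction; the case $\ell>0$ is symmetric, integrating to the left.) Hence $\ell=0$ and $E_0=-W(m)$. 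The same argument applied to a sequence $s_k$ with $h(s_k)\to M$ gives $E_0=-W(M)$, completing \eqref{7891}.

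The main obstacle is the rigorous justification that the limiting slope $\ell$ must vanish at the extremizing sequence; everything else is routine. The cleanest way to make this airtight is the interval-integration argument just sketched, using the uniform bound on $h''$ from \eqref{17889} to propagate a nonzero slope over an interval of fixed length independent of $k$, so that the resulting change in $h$ has a fixed positive lower bound while $h(t_k)-m\to 0$, producing the contradiction.
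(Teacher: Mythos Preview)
Your proof of \eqref{17889} is identical to the paper's. For \eqref{7891}, both you and the paper reduce to showing that $h'\to 0$ along an extremizing sequence, but the mechanisms differ. The paper shifts the function: setting $w_n(t):=h(t+t_n)$ and using \eqref{17889} to pass to a $C^1_{\rm loc}$ limit $w$, one sees that $w$ attains its global minimum at $0$ (since $w(0)=\sigma\le w(t)$ for all $t$), hence $w'(0)=0$, and then $h'(t_n)=w_n'(0)\to w'(0)=0$ along the convergent subsequence. Your route is instead a direct contradiction: a nonzero subsequential limit $\ell$ of $h'(t_k)$, combined with the uniform bound on $h''$, forces $h$ to drop by a fixed amount near $t_k$, violating the infimum. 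Both arguments are correct; the paper's compactness/shift approach is slightly more conceptual and avoids the quantitative bookkeeping (and generalizes readily to higher-order or PDE settings), while your argument is more elementary, needing only the mean value theorem and the $C^2$ bound.
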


\begin{proof} By construction, 
$$|h''(t)|\leq \max_{[-\| h\|_{L^\infty(\R)},
\| h\|_{L^\infty(\R)}]} |W'|$$
and so, by \eqref{Interpo},
we get \eqref{17889}.

We take 
$$ \sigma \in\Big\{ \inf_\R h,
\sup_\R h\Big\}.$$
Let also $t_n$ be a sequence for which
$$ \lim_{n\rightarrow+\infty} h(t_n)=\sigma.$$
Let $w_n(t):= h(t+t_n)$. {F}rom \eqref{17889},
we have that $w_n$ converges, up to subsequence,
in $C^1_{\rm loc}(\R)$ to some function $w\in C^1(\R)$.

We now suppose that
$\sigma = \inf_\R h$ (for this argument, the
case $\sigma=\sup_\R h$
is completely analogous). Then,
$$ w(0)=
\lim_{n\rightarrow+\infty} w_n(0)=
\lim_{n\rightarrow+\infty} h(t_n)=
\sigma \le \lim_{n\rightarrow+\infty} h(t+t_n)
= w(t)$$
for any $t\in \R$, so $w'(0)=0$ and therefore,
by
(\ref{intepri}), for any $t\in\R$,
\begin{equation*} \begin{split}
& \frac{|h'(t)|^2}{2}-W(h(t))
=\lim_{n\rightarrow+\infty}
\frac{|h'(t_n)|^2}{2}-W(h(t_n))
\\ &\quad
=\lim_{n\rightarrow+\infty}
\frac{|w'_n(0)|^2}{2}-W(w(0))
= -W(\sigma).\qedhere\end{split}\end{equation*}
\end{proof}

\begin{lemma}\label{lepa}
If $h'(t_o)=0$, then $h$ is symmetric  with respect to $t=t_o$,
that is $h(t_o-t)=h(t_o+t)$ for any $t\in\R$.
\end{lemma}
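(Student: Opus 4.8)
The plan is to exploit the autonomous nature of the ODE~\eqref{in2} together with a uniqueness argument for the associated initial value problem. Concretely, I would fix $t_o$ with $h'(t_o)=0$ and introduce the two functions $g(t):=h(t_o+t)$ and $\tilde g(t):=h(t_o-t)$, both defined on all of $\R$. The goal is to show $g\equiv\tilde g$, which is precisely the claimed symmetry $h(t_o+t)=h(t_o-t)$.

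The key observation is that $\tilde g$ also solves~\eqref{in2}: differentiating twice, $\tilde g''(t)=h''(t_o-t)=W'(h(t_o-t))=W'(\tilde g(t))$, where the sign works out because the chain rule produces two factors of $-1$. Likewise $g$ solves~\eqref{in2} trivially by translation invariance. Now I would compare their initial data at $t=0$: we have $g(0)=h(t_o)=\tilde g(0)$, and $g'(0)=h'(t_o)=0$ while $\tilde g'(0)=-h'(t_o)=0$, so $g'(0)=\tilde g'(0)$ as well. Since $W\in C^2(\R)$, the right-hand side $W'$ is $C^1$ hence locally Lipschitz, so the Cauchy problem for the second-order ODE~\eqref{in2} with prescribed value and derivative at $t=0$ has a unique $C^2$ solution on $\R$ (standard Picard–Lindelöf, applied to the first-order system $(h,h')'=(h',W'(h))$, together with the a priori bound that keeps the solution from blowing up — and in any case both $g$ and $\tilde g$ are already globally defined and bounded here). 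Therefore $g\equiv\tilde g$ on $\R$, which is the assertion.

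I do not expect any genuine obstacle here; the only point requiring a little care is the sign bookkeeping in verifying that $\tilde g$ solves the \emph{same} equation (not $\tilde g''=-W'(\tilde g)$), and making sure the uniqueness theorem is invoked with the correct regularity hypothesis on $W$, namely $W\in C^2$ so that $W'$ is $C^1$ and hence locally Lipschitz. If one wanted to avoid quoting ODE uniqueness, an alternative is to use the first integral~\eqref{intepri}: both $g$ and $\tilde g$ satisfy $\frac12|g'|^2-W(g)=\frac12|\tilde g'|^2-W(\tilde g)=-W(h(t_o))$ (the common value at $t=0$), but this energy relation alone does not pin down the solution without an extra monotonicity/uniqueness input, so the cleanest route really is the uniqueness statement for the initial value problem. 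Hence the write-up will be short: reflect, check the ODE is preserved, match initial data, conclude by uniqueness.
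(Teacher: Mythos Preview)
Your proposal is correct and follows exactly the same approach as the paper: define the reflected and translated functions $h(t_o\pm t)$, verify they both solve~\eqref{in2} with matching initial data $h(t_o)$ and $0$, and conclude by the Cauchy uniqueness theorem. The only difference is cosmetic---you spell out the Lipschitz justification for uniqueness and the sign bookkeeping, whereas the paper's proof is more terse.
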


\begin{proof} We set $h_\pm (t):=h(t_o\pm t)$.
Since $h_\pm ''(t)= W'(h_\pm(t))$ for any $t\in\R$,
$h_\pm (0)=h(t_o)$ and $h_\pm'(0)=0$, we deduce
from Cauchy Uniqueness Theorem that $h_+(t)=h_-(t)$. 
\end{proof}

\begin{lemma}\label{2orm}
If $h$ has two or more critical points, then it
is periodic.
\end{lemma}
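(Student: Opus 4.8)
The plan is to use the previous two lemmas to pin down the structure of a solution $h$ with two distinct critical points, say $h'(t_1)=h'(t_2)=0$ with $t_1<t_2$. By Lemma \ref{lepa}, $h$ is symmetric about $t=t_1$ and also symmetric about $t=t_2$. The composition of two reflections in the line is a translation: reflecting about $t_1$ and then about $t_2$ sends $t$ to $t+2(t_2-t_1)$. Concretely, for the reflection $\rho_j(t):=2t_j-t$ we have $h\circ\rho_j=h$ for $j=1,2$, hence $h(t)=h(\rho_1(t))=h(\rho_2(\rho_1(t)))$, and a direct computation gives $\rho_2(\rho_1(t))=t+2(t_2-t_1)$. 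Therefore $h(t+2(t_2-t_1))=h(t)$ for every $t\in\R$, i.e.\ $h$ is periodic with period $2(t_2-t_1)>0$.

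The key steps, in order, are: (i) extract two distinct critical points $t_1\ne t_2$; (ii) invoke Lemma \ref{lepa} at each of them to obtain the two reflection symmetries $h\circ\rho_{t_1}=h$ and $h\circ\rho_{t_2}=h$; (iii) compose the two symmetries to produce a nontrivial translation under which $h$ is invariant; (iv) conclude periodicity with period $2|t_2-t_1|$.

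I do not anticipate a serious obstacle here, as this is an entirely elementary consequence of Lemma \ref{lepa}; the only point requiring a word of care is that the two critical points be genuinely distinct so that the resulting translation is by a nonzero amount — but that is exactly the hypothesis ``two or more critical points.'' If one wants the minimal period, one could further argue that the period can be taken as twice the distance between two \emph{consecutive} critical points, but the statement as given only asks for periodicity, so step (iv) already suffices.
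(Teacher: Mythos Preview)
Your proof is correct and is essentially the same as the paper's: the paper also applies Lemma~\ref{lepa} at two critical points $a<b$ and chains the two reflections to obtain $h(t+T)=h(t-T)$ with $T=b-a$, hence period $2T$. Your formulation via $\rho_2\circ\rho_1(t)=t+2(t_2-t_1)$ is just a slightly more explicit way of writing the same computation.
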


\begin{proof}
Suppose that $h'(a)=h'(b)=0$ with $b>a$ and let $T:=b-a$.
Then, utilizing Lemma \ref{lepa},
\begin{equation*}\begin{split}&
h(t+T)=h(b+(t-a))=h(b-(t-a))\\ &\qquad=h(a-(t-b))
=h(a+(t-b))=h(t-T)\end{split}\end{equation*}
for any $t\in\R$, and so $h$ has period $2T$.
\end{proof}

\begin{lemma}\label{MaMi}
If $|h|\le 1$, then
$$ \sup_\R h = -\inf_\R h.$$
\end{lemma}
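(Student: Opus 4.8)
The plan is to exploit the ODE structure of~\eqref{in2} together with the energy identity~\eqref{7891} proved in the previous lemma, and the oddness of~$W$. Set $m:=\inf_\R h$ and $M:=\sup_\R h$; by assumption $-1\le m\le M\le 1$. The goal is to prove $m=-M$, i.e.\ $W(m)=W(M)$ by the energy identity together with some injectivity on the relevant range of~$W$, combined with a parity argument.

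First I would invoke~\eqref{7891}, which gives $W(m)=W(M)$ (both equal to $-\big(\tfrac{|h'(t)|^2}{2}-W(h(t))\big)$, independent of~$t$). So it suffices to deduce $m=-M$ from the single equation $W(m)=W(M)$ under the constraint $-1\le m\le 0\le M\le 1$ — note $m\le 0\le M$ since $h$ is continuous and, if $m>0$ or $M<0$, one uses that $h$ would be confined to an interval where $W'$ has a sign, forcing $h$ monotone and hence, being bounded, convergent to a critical point of~$W$ in~$(0,1)$ or~$(-1,0)$, contradicting~\eqref{UN}; alternatively, if $h$ is constant then $h\equiv\pm1$ or $h\equiv 0$ and the claim is trivial, and if $h$ is nonconstant it must attain values on both sides of some critical point. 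Once we know $m\in[-1,0]$ and $M\in[0,1]$, I would argue that $W$ restricted to $[-1,0]$ is strictly decreasing and $W$ restricted to $[0,1]$ is strictly increasing: indeed on $(-1,0)$ and $(0,1)$ the derivative $W'$ has constant sign by~\eqref{UN} (it cannot vanish in the open intervals), and evaluating $W$ at the endpoints ($W(0)>0=W(\pm1)$, using $W\ge0$, $W(\pm1)=0$, and $W(0)\ne W(\pm1)$ which follows from $W'(0)=0\ne W(0)$ only after noting $W>0$ away from $\pm1$) pins down the sign. Hence $W$ maps $[-1,0]$ bijectively onto $[0,W(0)]$ and $[0,1]$ bijectively onto $[0,W(0)]$, and by evenness $W(m)=W(-m)$; since $-m\in[0,1]$ and $M\in[0,1]$ and $W(M)=W(m)=W(-m)$ with $W$ injective on $[0,1]$, we conclude $M=-m$, as desired.

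The main obstacle, and the point requiring care, is ruling out the degenerate possibilities — that $h$ could be constant, or that $h$ could fail to cross~$0$, or that $\{m,M\}$ could include interior critical values in a way that breaks the strict monotonicity argument. The clean way around this is: if $h$ is nonconstant, by~\eqref{7891} the quantity $\tfrac{|h'|^2}{2}-W(h)\equiv -W(m)=-W(M)$; if $h'(t_0)=0$ for some~$t_0$ then $W(h(t_0))=W(m)$, so $h(t_0)\in\{m,M\}$ by the injectivity just established, meaning every critical point of~$h$ is a global extremum; combined with Lemmas~\ref{lepa} and~\ref{2orm}, $h$ is either monotone (no critical point) or periodic oscillating exactly between~$m$ and~$M$. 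In the monotone case boundedness forces $h'\to0$ at $\pm\infty$ and $h$ converges to limits that are critical points of~$W$; these limits are $m$ and $M$, so $m,M\in\{-1,0,1\}$ and then $W(m)=W(M)\in\{0,W(0)\}$ forces either $m=M$ (constant, excluded) or $\{m,M\}=\{-1,1\}$, giving $m=-M$ directly. In every case the conclusion $\sup_\R h=-\inf_\R h$ follows.
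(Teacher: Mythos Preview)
Your argument is essentially correct and shares the same skeleton as the paper's: deduce $W(m)=W(M)$ from~\eqref{7891}, place $m$ and $M$ on opposite sides of~$0$, then use evenness of~$W$ together with its strict monotonicity on~$[0,1]$ (from~\eqref{UN}) to conclude $M=-m$. Where you differ is the middle step. You obtain $m\le 0\le M$ through a dynamical argument about~$h$ (sign of~$W'$ on a subinterval forces convexity/concavity, boundedness then forces monotonicity, the limits at~$\pm\infty$ must be zeros of~$W'$), and you back this up with a further case split invoking Lemmas~\ref{lepa} and~\ref{2orm}. The paper does this in one line: since $W(m)=W(M)$, Rolle's theorem applied to~$W$ on $[m,M]$ yields $\xi\in(m,M)$ with $W'(\xi)=0$; by~\eqref{UN} and $(m,M)\subset(-1,1)$ this forces $\xi=0$, hence $m<0<M$. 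This sidesteps all the ODE dynamics entirely.

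Two small slips in your write-up: you say ``oddness of~$W$'' at the outset when you mean evenness; and your assertion that the constant cases $h\equiv\pm1$ make the claim ``trivial'' is not right---for $h\equiv 1$ one gets $\sup h=1\neq -1=-\inf h$, so the lemma as stated actually fails there. (The paper's proof has the same tacit gap, since Rolle needs $m<M$; in the paper the lemma is only ever applied to nonconstant~$h$ with $|h|<1$.)
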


\begin{proof} Let
$$ m:= \inf_\R h \aand
M:=\sup_\R h.$$
By \eqref{7891}, we have
\begin{equation}\label{rt}
W(m)=W(M).\end{equation}
Thus, by Rolle's Theorem, there exists
$\xi\in ( m, M)$ such that $h'(\xi)=0$.

{F}rom \eqref{UN}, we deduce that $\xi\in \{-1,0,+1\}$.
But since, by assumption, both $m$ and $M$
lie in $[-1,1]$, we have that $\xi\in (m,M)\subseteq
(-1,1)$ and so $\xi=0$.

This says that $m < 0< M$. Thus the claim follows
from \eqref{UN} and \eqref{rt}.
\end{proof}

\begin{lemma}
Suppose that $h$ is either non-periodic or it is constant
but not zero.
Then,
\begin{equation}\label{spe}
W\Big( \inf_\R h\Big)=
W\Big( \sup_\R h\Big)=0.\end{equation}
\end{lemma}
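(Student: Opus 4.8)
The plan is to exploit the conserved energy identity~\eqref{intepri} together with the boundedness estimate~\eqref{17889} and the classification of critical points of~$W$ in~\eqref{UN}. Write $m:=\inf_\R h$ and $M:=\sup_\R h$. By~\eqref{7891}, along any solution the quantity $\frac{|h'(t)|^2}{2}-W(h(t))$ equals the constant $-W(m)=-W(M)$, so it suffices to show that this constant is $0$, equivalently that $W(m)=W(M)=0$.

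First I would dispose of the constant case: if $h\equiv c$ is constant but $c\ne 0$, then $h''=0=W'(c)$ forces $c\in\{-1,+1\}$ by~\eqref{UN}, and since $W(\pm1)=0$ we get $W(\inf_\R h)=W(\sup_\R h)=W(c)=0$, which is~\eqref{spe}. So from now on assume $h$ is non-constant and non-periodic. Then $h'$ is not identically zero. I claim $h'$ has at most one zero: indeed, by Lemma~\ref{2orm}, if $h$ had two or more critical points it would be periodic, contrary to assumption. So $h'$ vanishes at most once and is otherwise of one sign on each side.

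Now I would argue that the values $m$ and $M$ are attained only in a limiting sense and force $h'\to 0$ along the relevant sequences. Take $\sigma\in\{m,M\}$ and a sequence $t_n$ with $h(t_n)\to\sigma$; by~\eqref{17889} we may, after translating and passing to a subsequence, assume $w_n(t):=h(t+t_n)$ converges in $C^1_{\rm loc}$ to a solution $w$ of~\eqref{in2} with $w(0)=\sigma$ being a global extremum of $w$, hence $w'(0)=0$ — this is exactly the computation already carried out in the proof of~\eqref{7891}, and it gives $-W(\sigma)=\frac{|h'(t)|^2}{2}-W(h(t))$ for all $t$. The one new thing to extract is that this extremal value is actually a genuine critical level: since $w$ is a solution with $w'(0)=0$, by Lemma~\ref{lepa} $w$ is symmetric about $0$, and by~\eqref{UN} the critical value $w(0)$ must lie in $\{-1,0,+1\}$ unless... and here is where I would insert the decisive point.

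The decisive point, and the step I expect to be the main obstacle, is ruling out that the conserved constant $-W(m)=-W(M)$ is strictly negative, i.e. that $W(m)>0$. Suppose for contradiction $W(m)=W(M)=:E>0$. From~\eqref{intepri}, $|h'(t)|^2=2(W(h(t))-E)\ge 0$ for all $t$, so $h$ stays in the region $\{W\ge E\}$; because $W(\pm1)=0<E$ and $W''(\pm1)\ne0$, the set $\{W\ge E\}$ misses neighborhoods of $\pm1$, and because $W''(0)\ne0$ with $W(0)\ge0$ it is a compact interval $[m,M]\subset(-1,1)$ (or a pair of intervals, but in any case a compact set on which $W-E$ vanishes exactly at the endpoints $m,M$ where necessarily $W'(m),W'(M)\ne0$, since $m,M$ being interior zeros of $W'$ would force them into $\{-1,0,+1\}$, impossible as $W(0)\ge0$, $W(\pm1)=0$, all different from $E>0$). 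Then near $t$ with $h(t)$ close to $m$ we have $|h'|^2=2(W(h)-E)$ with $\frac{d}{dh}(W-E)=W'(m)\ne0$, so $|h'|\sim c\sqrt{h-m}$, and the ODE $\dot h = \pm\sqrt{2(W(h)-E)}$ has a finite-time return: starting from $h=m$ the integral $\int_m^{M}\frac{dh}{\sqrt{2(W(h)-E)}}$ converges, so $h$ oscillates between $m$ and $M$ periodically — contradicting that $h$ is non-periodic. Hence $E=0$, giving~\eqref{spe}.
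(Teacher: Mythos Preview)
Your argument works and takes a genuinely different route from the paper. After the common reduction (constant case; at most one critical point via Lemma~\ref{2orm}), the paper proceeds by observing that at least one of $\sup_\R h$, $\inf_\R h$ is attained as a limit at $\pm\infty$; the $C^2$-bound~\eqref{17889} upgrades this to $C^1$-convergence, forcing $W'(\sup_\R h)=0$, hence $\sup_\R h\in\{-1,0,+1\}$ by~\eqref{UN}, and a short case analysis (the case $\sup_\R h=0$ with an interior minimum being excluded via~\eqref{7891}) finishes. Your approach instead assumes $E:=W(m)=W(M)>0$ and argues, via convergence of the period integral $\int_m^M dh/\sqrt{2(W(h)-E)}$, that the orbit must close up, contradicting non-periodicity. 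This is the classical phase-plane picture: a bit more machinery, but it explains structurally why a positive level $E$ forces periodicity, whereas the paper's argument is lighter and stays purely one-variable. Note also that your middle paragraph (extracting symmetry of the limit $w$ via Lemma~\ref{lepa}) is a detour you yourself abandon; it is not needed for either approach.

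One step in your write-up needs correcting. You claim $m,M\notin\{-1,0,+1\}$ because ``$W(0)\ge0$, $W(\pm1)=0$, all different from $E>0$'' --- but nothing you wrote rules out $W(0)=E$. The right way to exclude $m=0$ or $M=0$ is: if, say, $M=0$, then $E=W(0)$; since $W'$ does not vanish on $(-1,0)\cup(0,1)$ and $W(\pm1)=0<W(0)$, the value $W(0)$ is the strict maximum of $W$ on $[-1,1]$, so the constraint $W(h(t))\ge E=W(0)$ forces $h\equiv0$, against non-constancy. With this fix the endpoints $m,M$ are regular turning points ($W'(m),W'(M)\ne 0$), the period integral converges at both ends, and your contradiction goes through.
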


\begin{proof} If $h$ is constant but not zero,
then either $h$ is constantly equal to $-1$
or it is constantly equal to $+1$, because of \eqref{UN}.

Since in such cases \eqref{spe} is obvious,
we focus on the case in which $h$ is not periodic.
Then, by Lemma \ref{2orm},
\begin{equation}\label{1a}
{\mbox{$h$ has at most one
critical point.
}}
\end{equation}
In particular,
$h$ attains either its $\sup$ or its $\inf$ at either $+\infty$
or $-\infty$. So, let us assume, for definiteness
that
\begin{equation}\label{71}
\sup_\R h=
\lim_{t\rightarrow +\infty}h(t),\end{equation}
the other cases being analogous.

By \eqref{17889}, we obtain that the limit
in \eqref{71} holds in $C^1$, therefore
$$ 0= \lim_{t\rightarrow+\infty}
\int_\R h'(s+t) \phi'(s)+
W'(h(s+t))\phi(s)\,  ds=
\int_\R W'\Big( \sup_\R h\Big) \phi(s)\,ds,$$
for any $\phi\in C^\infty_0(\R)$.

This says that
$$ W'\Big( \sup_\R h\Big)=0$$
and so, by \eqref{UN},
$$ \sup_\R h\in \{-1,0,+1\}.$$
If $ \sup_\R h\in \{-1,+1\}$, then
\eqref{spe} holds true, recalling \eqref{7891}.

Thus, we consider the case
$$ \sup_\R h=0.$$
Then, recalling \eqref{1a}, we have two
possibilities: either 
\begin{equation}\label{72}
\inf_\R h=
\lim_{t\rightarrow -\infty}h(t),\end{equation}
or there exists $t_o\in\R$ such
that
\begin{equation}\label{73} h(t_o)=\inf_\R h
.\end{equation}
Now, if \eqref{72} holds, we repeat the argument
after \eqref{71} to obtain
that
$$ W'\Big( \inf_\R h\Big)=0$$
and so, by \eqref{UN},
\begin{equation}\label{a7888}
\inf_\R h\in \{-1,0,+1\}.\end{equation}
Since $h$ is not constantly equal to zero,
we have that
$$ 0=\sup_\R h> \inf_\R h$$
and so \eqref{a7888} means that 
$\inf_\R h=-1$. This implies
that
\eqref{spe} holds true, recalling \eqref{7891}.

Thus, we have only to deal
with the case in which \eqref{73} holds,
which we now show that is impossible.
Indeed, if \eqref{73}
were true, we would have $h'(t_o)=0$
and so, by \eqref{7891},
$$ W(0)=W\Big(\sup_\R h\Big)=
W(h(t_o)).$$
Thus, by \eqref{UN}, we would have that
$h(t_o)=0$. That is,
$$ \sup_\R h=0=\inf_\R h,$$
in contradiction with our assumptions.
\end{proof}

\begin{lemma}\label{0aoo}
Let $a<b$. If $h$ is monotone in $(a,b)$ then
$$ \int_a^b |h'(t)|\,dt \le 2
\sup_\R |h|.$$
\end{lemma}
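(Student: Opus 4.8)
The plan is to reduce the estimate to the trivial one-variable bound that the total variation of a monotone function on an interval equals the difference of its endpoint values. Since $h$ is monotone on $(a,b)$, we have
$$\int_a^b |h'(t)|\,dt = \left| \int_a^b h'(t)\,dt\right| = |h(b)-h(a)|,$$
where the first equality uses that $h'$ does not change sign on $(a,b)$. This is the key step, and it is not really an obstacle at all; the only mild care needed is that $h'$ could vanish at isolated points or on a subinterval, but monotonicity still forces $h'$ to have constant sign (weakly), so pulling the absolute value outside the integral is legitimate.

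Then I would bound the right-hand side by the triangle inequality:
$$|h(b)-h(a)| \le |h(b)| + |h(a)| \le 2\sup_\R |h|,$$
since both $a$ and $b$ are real numbers and $h$ is defined on all of $\R$. Combining the two displays yields
$$\int_a^b |h'(t)|\,dt \le 2\sup_\R |h|,$$
which is exactly the claimed inequality.

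I do not expect any genuine difficulty here; the lemma is an elementary observation, and the proof is essentially two lines. The only point worth stating explicitly in the write-up is the justification for moving the absolute value through the integral sign, namely the definition of monotonicity, so that the reader sees precisely where that hypothesis is used. No appeal to the earlier ODE analysis or to equation \eqref{in2} is needed; the statement holds for any $C^1$ (indeed merely absolutely continuous) function that happens to be monotone on the interval in question.
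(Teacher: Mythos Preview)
Your proof is correct and follows exactly the same approach as the paper: use monotonicity to write $\int_a^b |h'|=\left|\int_a^b h'\right|=|h(b)-h(a)|$, then bound by $2\sup_\R|h|$. The paper's proof is in fact the single displayed line you wrote, with no further commentary.
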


\begin{proof} We have\begin{equation*}
\int_a^b |h'(t)|\,dt = \left| \int_a^b h'(t)\,dt\right|=|h(b)-h(a)|\leq
2\sup_\R |h|.\qedhere
\end{equation*}\end{proof}

\begin{lemma}
Suppose $|h|\le 1$. Then, if $h$ is not periodic, 
$$ \int_{-\infty}^{+\infty}|h'(t)|\,dt \le 4.$$
\end{lemma}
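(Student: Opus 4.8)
The plan is to combine the ODE structure theorems already established with the basic monotonicity estimate of Lemma~\ref{0aoo}. Since $|h|\le 1$ and $h$ is not periodic, Lemma~\ref{2orm} tells us that $h$ has at most one critical point; call it $t_o$ if it exists, and split $\R$ into the two half-lines $(-\infty,t_o)$ and $(t_o,+\infty)$ (if there is no critical point at all, $h$ is globally monotone and we treat $\R$ as a single interval). On each such interval $h'$ does not vanish, hence $h$ is monotone there, so Lemma~\ref{0aoo} applies (in the half-line case, apply it on $(-N,t_o)$ and $(t_o,N)$ and let $N\to+\infty$, using that the integrals are monotone in $N$ and bounded). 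This yields
\begin{equation*}
\int_{-\infty}^{+\infty}|h'(t)|\,dt=\int_{-\infty}^{t_o}|h'(t)|\,dt+\int_{t_o}^{+\infty}|h'(t)|\,dt\le 2\sup_\R|h|+2\sup_\R|h|\le 4,
\end{equation*}
since $\sup_\R|h|\le 1$.

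A small point I would be careful about: Lemma~\ref{0aoo} as stated gives the bound $2\sup_\R|h|$ on a \emph{single} monotone interval, and one might worry that summing over the two pieces double-counts and only gives $2$ rather than $4$ — but in fact the bound $|h(b)-h(a)|\le 2\sup_\R|h|$ is used on each piece separately and the displayed arithmetic above is exactly what Lemma~\ref{0aoo} delivers, so $4$ is the correct (and sharp, by the heteroclinic $\pm1$ case) constant. The only genuine content beyond bookkeeping is the passage to the limit on the unbounded intervals, which is immediate from monotone convergence since $t\mapsto|h'(t)|$ is nonnegative.

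The main (minor) obstacle is simply making sure the ``at most one critical point'' dichotomy is invoked correctly: one should observe that, because $h'$ is continuous and does not change sign away from its single zero, $h$ is genuinely monotone on each of the two complementary open half-lines, which is what licenses the use of Lemma~\ref{0aoo}. Everything else is a direct substitution of $\sup_\R|h|\le1$.

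\begin{proof}
By Lemma~\ref{2orm}, since $h$ is not periodic, $h$ has at most one critical point. If $h$ has no critical point, then $h'$ never vanishes and, being continuous, has a constant sign, so $h$ is monotone on all of $\R$; applying Lemma~\ref{0aoo} on $(-N,N)$ and letting $N\to+\infty$ (the integrals being nondecreasing in $N$) gives $\int_{-\infty}^{+\infty}|h'(t)|\,dt\le 2\sup_\R|h|\le 2\le 4$.

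Otherwise, let $t_o$ be the unique critical point of $h$. Then $h'$ does not vanish on $(-\infty,t_o)$ nor on $(t_o,+\infty)$, and by continuity it has a constant sign on each of these intervals, so $h$ is monotone on $(-\infty,t_o)$ and on $(t_o,+\infty)$. For every $N>|t_o|$, Lemma~\ref{0aoo} yields
\begin{equation*}
\int_{-N}^{t_o}|h'(t)|\,dt\le 2\sup_\R|h|\qquad\text{and}\qquad\int_{t_o}^{N}|h'(t)|\,dt\le 2\sup_\R|h|.
\end{equation*}
Letting $N\to+\infty$ and using that the two integrals are nondecreasing in $N$, we obtain
\begin{equation*}
\int_{-\infty}^{+\infty}|h'(t)|\,dt=\int_{-\infty}^{t_o}|h'(t)|\,dt+\int_{t_o}^{+\infty}|h'(t)|\,dt\le 4\sup_\R|h|.
\end{equation*}
Since $|h|\le 1$, we have $\sup_\R|h|\le 1$, and therefore $\int_{-\infty}^{+\infty}|h'(t)|\,dt\le 4$.
\end{proof}
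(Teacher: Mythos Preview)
Your proof is correct and follows essentially the same route as the paper: invoke Lemma~\ref{2orm} to get at most one critical point, split $\R$ at that point into intervals on which $h$ is monotone, and apply Lemma~\ref{0aoo} on each before passing to the limit. The only cosmetic difference is that the paper handles the ``no critical point'' and ``one critical point'' cases uniformly by picking an arbitrary $c$ in the first case, whereas you treat them separately.
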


\begin{proof} By Lemma \ref{2orm}, we see
that only two cases hold: either $h'$ never vanish
or $h'$ has only one zero.

In any case, there exists $c\in \R$ in such a way that
$h'(t)\ne0$ for any $t\in (-\infty, c)\cup(c,+\infty)$.

Consequently, by Lemma \ref{0aoo}, for any $a<c<b$,
$$ \int_a^b|h'(t)|\,dt =\int_a^c |h'(t)|\,dt+
\int_c^b |h'(t)|\,dt\le
2\sup_\R |h|+2\sup_\R |h|.
$$
Then, the desired result follows by sending
$a\rightarrow-\infty$ and
$b\rightarrow+\infty$.
\end{proof}

\begin{lemma}\label{jajjjk}
Let
$$ \sigma \in \left\{ \inf_\R h, \sup_\R h\right\}.$$
Suppose $|h|\le 1$. Then,
\begin{equation}\label{aii88}
\int_{-\infty}^{+\infty} \frac{|h'(t)|^2}{2}+
W(h(t))-W(\sigma) \,dt \le C,\end{equation}
for a suitable structural constant $C>0$,
unless
$h$ is periodic and non-constant.
\end{lemma}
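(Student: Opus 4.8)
The plan is to combine the conservation law \eqref{7891} with the integrability of $|h'|$ established in the preceding lemmas, separating the analysis according to the dichotomy furnished by Lemma \ref{2orm}. First I would observe that \eqref{7891} rewrites the integrand in \eqref{aii88} as $|h'(t)|^2$, since
\[
\frac{|h'(t)|^2}{2}+W(h(t))-W(\sigma)=\frac{|h'(t)|^2}{2}+\Big(W(h(t))-\frac{|h'(t)|^2}{2}+W(\sigma)\Big)\cdot 0 \ ,
\]
more precisely \eqref{7891} gives $W(h(t))-W(\sigma)=\tfrac12|h'(t)|^2$ (taking $\sigma$ to be either the $\inf$ or the $\sup$, both of which yield the same value $-W(\sigma)$ by \eqref{7891}), so that
\[
\int_{-\infty}^{+\infty}\frac{|h'(t)|^2}{2}+W(h(t))-W(\sigma)\,dt=\int_{-\infty}^{+\infty}|h'(t)|^2\,dt .
\]
Thus the claim reduces to bounding $\int_\R |h'|^2$ by a structural constant whenever $h$ is not periodic-and-nonconstant.

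Next I would dispose of the degenerate case. If $h$ is constant, then $h'\equiv 0$ and the integral vanishes, so \eqref{aii88} holds trivially (and the excluded case $h$ periodic nonconstant is exactly the one case left open). So assume $h$ is non-periodic. By Lemma \ref{2orm}, $h$ has at most one critical point, so there is $c\in\R$ with $h'\neq 0$ on $(-\infty,c)\cup(c,+\infty)$; in particular $h$ is monotone on each of the two half-lines $(-\infty,c)$ and $(c,+\infty)$. On each such interval I would use the uniform bound $\|h'\|_{L^\infty(\R)}\le C$ from \eqref{17889} together with Lemma \ref{0aoo}: writing, for $a<c<b$,
\[
\int_a^b |h'(t)|^2\,dt\le \|h'\|_{L^\infty(\R)}\int_a^b|h'(t)|\,dt
=\|h'\|_{L^\infty(\R)}\Big(\int_a^c|h'(t)|\,dt+\int_c^b|h'(t)|\,dt\Big)
\le \|h'\|_{L^\infty(\R)}\cdot 4\sup_\R|h|,
\]
where the last step invokes Lemma \ref{0aoo} on each monotone piece (each contributes at most $2\sup_\R|h|\le 2$). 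Letting $a\to-\infty$ and $b\to+\infty$ gives $\int_\R|h'|^2\le 4C\sup_\R|h|\le 4C$, a structural constant, which combined with the identity above yields \eqref{aii88}.

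I expect the only subtlety to be a bookkeeping one: one must make sure that the value $-W(\sigma)$ appearing in the integrand is independent of which of $\inf_\R h$ or $\sup_\R h$ is chosen for $\sigma$ — but this is precisely the content of the first equality in \eqref{7891}, so no extra work is needed. The hard part of the surrounding theory (controlling $\|h'\|_{L^\infty}$ and the total variation of $h$) has already been done in \eqref{17889} and Lemma \ref{0aoo}; here everything is a short assembly. One might also note that the case $h$ non-periodic but with $\sup_\R h$ or $\inf_\R h$ attained at a finite critical point is already absorbed, since the argument above only uses that $h$ is monotone off a single point $c$, regardless of whether $c$ is an actual critical point or merely a point past which monotonicity holds.
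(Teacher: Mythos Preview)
Your proposal is correct and follows essentially the same route as the paper: use the conservation law \eqref{7891} to rewrite the integrand as $|h'(t)|^2$, then bound $\int_\R |h'|^2\le \|h'\|_{L^\infty}\int_\R|h'|$ via \eqref{17889}, and finally control $\int_\R|h'|$ by splitting at the (at most one) critical point and applying Lemma~\ref{0aoo} on each monotone half-line. The only cosmetic difference is that the paper packages the last step as a separate lemma (the one immediately preceding Lemma~\ref{jajjjk}, giving $\int_\R|h'|\le 4$), which you have simply reproved inline; also, your first displayed identity is garbled and should be deleted in favor of the ``more precisely'' sentence that follows it.
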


\begin{proof}
Since \eqref{aii88} is obvious for $h$ constant,
we focus on the case in which $h$ is not periodic.

We exploit Lemma \ref{intepri},
\eqref{17889} 
and \eqref{7891} to conclude that
\begin{equation*}
\begin{split}
&\!\!\! \int_{-\infty}^{+\infty} \frac{|h'(t)|^2}{2}+
W(h(t))-W(\sigma) \,dt \\
&=
\int_{-\infty}^{+\infty} {|h'(t)|^2} \,dt \\
& \le 
C\int_{-\infty}^{+\infty} {|h'(t)|} \,dt 
\\ &\le 4C.\qedhere
\end{split}
\end{equation*}
\end{proof}

\subsection{A compactness result}

We now point out a useful compactness criterion:

\begin{lemma}\label{CCC}
Let $u_k$ be a sequence of
solutions of \eqref{main pde} in the whole $\R^2$.
Then, up to subsequence,
$u_k$ converges locally uniformly to some
$u$ which is also a solution of
\eqref{main pde} in the whole $\R^2$.
\end{lemma}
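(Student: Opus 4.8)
\textbf{Proof proposal for Lemma \ref{CCC}.}

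The plan is to derive uniform local $C^{2,\alpha}$-type bounds on the $u_k$ from the equation, and then apply Arzel\`a--Ascoli together with a passage to the limit in the weak formulation. First I would note that the sequence $u_k$ must be assumed (or is implicitly) uniformly bounded in $L^\infty_{\rm loc}$ — more precisely, the statement is really meant for solutions that are uniformly bounded on compact sets, which is the situation in which it gets applied later; I would make this hypothesis explicit if needed, or observe that in the intended application the $u_k$ take values in $[-1,1]$. Granting a uniform local bound $\|u_k\|_{L^\infty(B_R(0))}\le M_R$, the right-hand side $f(u_k)$ is then uniformly bounded on $B_R(0)$ by $\max_{[-M_R,M_R]}|f|$, since $f\in C^1$.

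Next I would invoke interior regularity estimates for the Grushin Laplacian. The operator $\Delta_G=\partial_{xx}+x^2\partial_{yy}$ is a sum-of-squares H\"ormander operator (the vector fields $X$ and $Y=x\partial_y$ together with their commutator $T=[X,Y]=\partial_y$ span the tangent space at every point), so it is hypoelliptic and satisfies subelliptic estimates: a bound on $\Delta_G u_k$ and on $u_k$ in $L^\infty(B_{2R}(0))$ yields a bound on $u_k$ in a suitable subelliptic H\"older space $C^{0,\alpha}_G(B_R(0))$, and then, bootstrapping once using $f\in C^1$, a bound on the horizontal derivatives $\nabla_G u_k$ and on $u_k$ in $C^{1,\alpha}_G(B_R(0))$. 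These subelliptic H\"older norms control, in particular, the ordinary continuity modulus of $u_k$ on compact Euclidean sets (the Grushin metric is topologically equivalent to the Euclidean one, and comparable to it away from the line $\{x=0\}$, while near that line the Grushin balls are still Euclidean-open), so the family $\{u_k\}$ is equicontinuous and equibounded on every compact subset of $\R^2$.

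By Arzel\`a--Ascoli and a diagonal argument over an exhaustion of $\R^2$ by balls $B_n(0)$, a subsequence of $u_k$ converges locally uniformly to some continuous $u$. It remains to check that $u$ solves \eqref{main pde}. For each $\phi\in C^\infty_0(\R^2)$ one has, by Lemma \ref{Green} applied on a large ball (or simply by the definition of the Grushin Laplacian and integration by parts in the classical sense, since each $u_k$ is smooth enough),
\begin{equation*}
\int_{\R^2} \Big( -<\nabla_G u_k,\nabla_G\phi> + f(u_k)\,\phi\Big)=0 ;
\end{equation*}
the term $f(u_k)\phi\to f(u)\phi$ uniformly with fixed compact support by continuity of $f$ and local uniform convergence, while the uniform $C^{1,\alpha}_G$ bound lets one pass to the limit (weak-$*$, or along a further subsequence strongly in $L^2_{\rm loc}$) in $<\nabla_G u_k,\nabla_G\phi>$. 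Hence $u$ is a weak solution of \eqref{main pde}, and by hypoellipticity of $\Delta_G$ it is in fact a smooth (classical) solution. The main obstacle is the subelliptic regularity input: one needs the Grushin-specific interior Schauder/H\"older estimates rather than the classical elliptic ones, since $\Delta_G$ is degenerate on $\{x=0\}$ — once those are granted (they are standard for H\"ormander sum-of-squares operators, cf.\ the degenerate-elliptic literature), the rest is a routine compactness-and-limit argument.
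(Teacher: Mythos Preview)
Your argument is correct, and your observation that a uniform local $L^\infty$ bound is implicitly needed is well taken (the paper silently assumes it as well). However, your route differs from the paper's. You invoke the full subelliptic regularity package: subelliptic $C^{1,\alpha}_G$ bounds to gain compactness of $\nabla_G u_k$, then pass to the limit in the weak formulation, and finally appeal to hypoellipticity of $\Delta_G$ to upgrade the weak limit to a classical solution. The paper instead uses only the most basic subelliptic ingredient, namely the interior $C^\alpha$ estimate, to extract a locally uniformly convergent subsequence with continuous limit $u$; then it exploits the fact that $\Delta_G$ is \emph{uniformly elliptic} on each strip $\{a<|x|<1/a\}$, so classical interior Schauder estimates give $C^{2,\beta}$ convergence there and hence $\Delta_G u = W'(u)$ pointwise off $\{x=0\}$; finally, since $W'(u)$ is continuous everywhere, so is $\Delta_G u$, and the equation extends across the degenerate line by continuity. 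Your approach is cleaner and generalizes directly to other H\"ormander operators without an identifiable ``non-degenerate region''; the paper's approach is more elementary in that it avoids subelliptic Schauder theory and the hypoellipticity upgrade, trading them for a simple continuity argument at the degenerate set.
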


\begin{proof}
By Grushin-elliptic regularity (see, e.g.,
\cite{Franchi, FS, K, VCSC}),
we have that
$$ \| u_k \|_{C^{\alpha} (\R^2) } \leq \bar C$$
for some $\bar C>0$,
therefore 
up to subsequence,
$u_k$ converges locally uniformly to some
$u$, and so
\begin{equation}\label{Li2}
{\mbox{$u$ is continuous.}}
\end{equation}
Moreover, for any $a\in(0,1)$, the Grushin operator
$\Delta_G$
is uniformly
elliptic in $D_a:=\{ |x|\in (a,1/a)\}$, therefore
standard elliptic estimates give that
$$ \| u_k \|_{C^{2,\beta_a} (D_a) } \leq \bar C_a$$
and so
\begin{equation}\label{88a880}
\Delta_G u (x)= W'(u(x))
\end{equation}
for any $x\in D_a$.

Since $a$ can be taken arbitrarily small,
we have that
\eqref{88a880}
holds for any $x\in\R^2\setminus\{0\}$.
But then, since the map $x\mapsto W'(u (x))$
is continuous, by means of \eqref{Li2}, it follows that
$\Delta_G u$ is continuous too and so
\eqref{88a880}
holds for any $x\in\R^2$.
\end{proof}

\subsection{Basic spectral theory}

\begin{lemma}\label{SPEC}
Fix $a\in(0,1)$, $R\ge 1$. Let~$x_o=a+R$ 
and $\Omega:= B_R (x_o,0)$.

Then, there exists~$\lambda\in \R$ for which
there exists a non-trivial solution~$\phi$
of
\begin{equation}\label{EI}
\left\{
\begin{matrix}
\Delta_G \phi+\lambda \phi=0 & {\mbox{ in $\Omega$}},\\
\phi=0 & {\mbox{ on $\partial\Omega$}}.
\end{matrix}\right.
\end{equation}
Moreover, we can take
\begin{equation}\label{EI2}
\lambda \in \Big( 0, \frac{C}{R^2}\Big],
\end{equation}
for a suitable~$C>0$.
\end{lemma}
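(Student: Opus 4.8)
The plan is to take $\lambda$ to be the first Dirichlet eigenvalue of $-\Delta_G$ on $\Omega$ and $\phi$ an associated eigenfunction; the key point is that $\Omega$ stays away from the singular line $\{x=0\}$, so that on $\Omega$ the Grushin operator is a genuinely uniformly elliptic one. Indeed, every $(x,y)\in\Omega=B_R(x_o,0)$ satisfies $|x-x_o|^4+4y^2<R^4$, hence $|x-x_o|<R$, so that $a<x<a+2R<3R$ (here we used $a<1\le R$). Consequently, on $\Omega$ the coefficient of $\partial_{yy}$ in $\Delta_G=\partial_{xx}+x^2\partial_{yy}$ obeys $0<a^2\le x^2<9R^2$, i.e. $\Delta_G$ is uniformly elliptic on $\Omega$ with smooth coefficients. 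Moreover $\Omega=\{(x-x_o)^4+4y^2<R^4\}$ is a bounded \emph{convex} domain (its defining function is convex) whose boundary is a smooth curve, since $\nabla\big((x-x_o)^4+4y^2\big)$ vanishes only at the center $(x_o,0)$, which is not on $\partial\Omega$. Thus we are placed in the classical framework of self-adjoint uniformly elliptic operators on a bounded smooth domain.

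First I would set up the variational problem. Let
\[
\lambda:=\inf\left\{\frac{\int_\Omega|\nabla_G v|^2}{\int_\Omega v^2}\ :\ v\in H^1_0(\Omega),\ v\not\equiv0\right\},\qquad
\int_\Omega|\nabla_G v|^2:=\int_\Omega (\partial_x v)^2+x^2(\partial_y v)^2.
\]
Since $a<1$ we have $|\nabla_G v|^2\ge a^2|\nabla v|^2$, so the natural energy space coincides with the usual $H^1_0(\Omega)$, the embedding $H^1_0(\Omega)\hookrightarrow L^2(\Omega)$ is compact, and the infimum is attained at some $\phi\not\equiv0$; moreover $\lambda>0$, by the Euclidean Poincar\'e inequality on the bounded set $\Omega$ together with $|\nabla_G v|^2\ge a^2|\nabla v|^2$. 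Writing the Euler--Lagrange equation for the minimizer, $\phi$ is a non-trivial weak solution of $\Delta_G\phi+\lambda\phi=0$ vanishing on $\partial\Omega$; by uniform ellipticity and smoothness of $\partial\Omega$, elliptic regularity makes $\phi$ a classical solution, which yields \eqref{EI}.

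It remains to establish the upper bound in \eqref{EI2}, and by the above variational characterization it is enough to produce a competitor $v\in H^1_0(\Omega)$ with $\int_\Omega|\nabla_G v|^2\le (C/R^2)\int_\Omega v^2$. The relevant observation is that $\Omega$ contains the Euclidean rectangle $\mathcal{Q}:=(x_o-\tfrac R2,\,x_o+\tfrac R2)\times(-\tfrac{R^2}{4},\,\tfrac{R^2}{4})$, because on $\mathcal{Q}$ one has $(x-x_o)^4+4y^2<\tfrac{R^4}{16}+\tfrac{R^4}{4}<R^4$. I would take a product cut-off $v(x,y):=\alpha_0\!\big(\tfrac{2(x-x_o)}{R}\big)\,\beta_0\!\big(\tfrac{4y}{R^2}\big)$, with $\alpha_0,\beta_0$ fixed non-trivial smooth bumps supported in $(-1,1)$, so that $v\in C^\infty_0(\mathcal{Q})\subset C^\infty_0(\Omega)$. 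Separating variables in the integrals, using $x^2<9R^2$ on $\Omega$ and the elementary scaling of one-dimensional Dirichlet integrals, one gets
\[
\frac{\int_\Omega|\nabla_G v|^2}{\int_\Omega v^2}\ \le\ \frac{4}{R^2}\cdot\frac{\int(\alpha_0')^2}{\int\alpha_0^2}\ +\ 9R^2\cdot\frac{16}{R^4}\cdot\frac{\int(\beta_0')^2}{\int\beta_0^2}\ =\ \frac{C}{R^2},
\]
for a structural constant $C>0$. Since $\lambda$ is the infimum of such Rayleigh quotients, \eqref{EI2} follows, and in particular $\lambda\in(0,C/R^2]$.

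I do not expect a genuine obstacle here: once one notices that $\Omega$ avoids $\{x=0\}$, and hence is a uniformly elliptic (and convex, smooth) domain, everything reduces to classical spectral theory and a single test-function estimate. The only points requiring a little care are the explicit verification that $\mathcal{Q}\subset\Omega$ and the bookkeeping of the powers of $R$ under the anisotropic scaling $x\sim R$, $y\sim R^2$ natural to the Grushin geometry; both are routine.
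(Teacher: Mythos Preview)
Your proof is correct and follows essentially the same approach as the paper: both take $\lambda$ to be the Rayleigh-quotient infimum, use that $a\le x\le a+2R$ on $\Omega$ to get uniform ellipticity (hence compactness and $\lambda>0$), and then estimate $\lambda$ from above via the natural anisotropic scaling $x\sim R$, $y\sim R^2$. The only cosmetic difference is in the last step: the paper performs the change of variable $(x,y)=(x_o+Rw,R^2z)$ to reduce to the Euclidean Poincar\'e constant on $B_1$, whereas you insert an explicit product test function on a rectangle $\mathcal{Q}\subset\Omega$; both compute to $C/R^2$ for the same reason.
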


\begin{proof}
We have
\begin{equation}\label{C0}\begin{split}
&\int_\Omega |\nabla_G v|^2\le
\int_\Omega |\partial_x v|^2+(x_o+R
)^2|\partial_y v|^2
\\&\qquad
\le (1+a+2R)^2 \int_\Omega |\nabla v|^2.
\end{split}\end{equation}
Therefore, by standard
Poincar\'e inequality,
\begin{equation}\label{C1}
\int_\Omega |\nabla_G v|^2\le C(a,R) \int_\Omega |v|^2
,\end{equation}
for any~$v\in C^\infty_0(\Omega)$.

Moreover, 
\begin{equation}\label{C2}\begin{split}
&\int_\Omega |\nabla_G v|^2\ge
\int_\Omega |\partial_x v|^2+ a |\partial_y v|^2
\\&\qquad
\ge \min\{1,a\} \int_\Omega |\nabla v|^2.
\end{split}\end{equation}

{F}rom~\eqref{C1}, we thus follow the standard
minimization argument, 
taking
\begin{equation}\label{C7}
\lambda\,=\,
\inf_{v\in C^\infty_0(\Omega)\setminus\{0\}}
\frac{
\int_\Omega |\nabla_G v|^2
}{
\int_\Omega v^2
}
\end{equation}
and we
recover compactness
from~\eqref{C2} and the classical embeddings,
proving~\eqref{EI}.

Now, if~$\lambda$ and~$\phi$ satisfy~\eqref{EI}, 
we may suppose that
$$ \int_\Omega \phi^2 = 1$$
and so
$$ \int_\Omega |\nabla_G \phi|^2 =\lambda $$
which gives that~$\lambda> 0$.

Also, from~\eqref{C7}
and a change of variable,
\begin{eqnarray*}
\lambda&=&
\inf_{v\in C^\infty_0(B_R(x_o,0))\setminus\{0\}}
\frac{
\int_{\{(x-x_o)^4+4y^2\le R^4\}} |\partial_x v(x,y)|^2+|x|^2
|\partial_y v(x,y)|^2
\,d(x,y)
}{
\int_{\{(x-x_o)^4+4y^2\le R^4\}} |v(x,y)|^2\,d(x,y)
}
\\ 
&=& 
%\inf_{v\in C^\infty_0(B_R(x_o,0))\setminus\{0\}}
%\frac{
%\int_{\{w^4+4z^2\le 1\}} |\partial_x v(x_o+Rw,R^2z)|^2+|
%x_o+Rw
%|^2
%|\partial_y v
%(x_o+Rw,R^2z)
%|^2
%\,d(w,z)
%}{
%\int_{\{w^4+4z^2\le 1\}} |v
%(x_o+Rw,R^2z)
%|^2\,d(w,z)
%}\\ &=&
\inf_{\psi\in C^\infty_0(B_1 )\setminus\{0\}}
\frac{
\int_{B_1} R^{-2} (\partial_w \psi(w,z))^2
+R^{-4} |x_o+R w|^2 (\partial_z \psi(w,z))^2
}{
\int_{B_1} \psi^2
}\\
&\le& 
\inf_{\psi\in C^\infty_0(B_1 )\setminus\{0\}}
\frac{
\int_{B_1} R^{-2} (\partial_w \psi(w,z))^2
+R^{-4} (3R)^2 (\partial_z \psi(w,z))^2
}{
\int_{B_1} \psi^2
}\\ &\le&
10 R^{-2}
\inf_{\psi\in C^\infty_0(B_1 )\setminus\{0\}}
\frac{
\int_{B_1} |\nabla \psi|^2
}{
\int_{B_1} \psi^2
}.
\end{eqnarray*}
This and the classical
Poincar\'e inequality imply~\eqref{EI2}.
\end{proof}

\subsection{Extension of bounded harmonic functions}

\begin{lemma}\label{EXTE}
Let~$u$ be~$\Delta_G$-harmonic in~$B_r\setminus\{0\}$.
Suppose that~$u$ is bounded in~$B_r\setminus\{0\}$.
Then, it may be extended to a $\Delta_G$-harmonic 
in~$B_r$.
\end{lemma}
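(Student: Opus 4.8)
The plan is to treat the singular set $\{0\}$ as a removable singularity for the Grushin-harmonic function $u$, adapting the classical Euclidean argument for removable singularities of bounded harmonic functions. The natural route is through the mean-value/comparison machinery: we have a bounded $\Delta_G$-harmonic function on a punctured ball, and we want to show the singularity at the origin carries no mass. I would first pick a boundary datum equal to $u$ on $\partial B_r$ and solve the Dirichlet problem for $\Delta_G$ on the full ball $B_r$ (the Grushin operator being hypoelliptic and the ball being a reasonable domain, this solvability is standard, cf.\ the regularity references already cited in the proof of Lemma~\ref{CCC}); call the solution $\tilde u$. The goal is then to prove $u = \tilde u$ on $B_r \setminus \{0\}$, which furnishes the desired extension.

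The key step is a barrier estimate at the origin. Set $w := u - \tilde u$, which is bounded and $\Delta_G$-harmonic on $B_r \setminus \{0\}$ with $w = 0$ on $\partial B_r$. I would build a $\Delta_G$-harmonic (or $\Delta_G$-superharmonic) function $\Phi$ on $B_r \setminus \{0\}$ that is nonnegative, vanishes suitably on $\partial B_r$, and blows up as $\|\zeta\| \to 0$. In the Grushin setting the fundamental solution of $\Delta_G$ (with pole at the origin) behaves like a negative power of the Grushin norm $\|(x,y)\|$, so a suitable multiple $\varepsilon \Phi$ dominates $|w|$ on the small sphere $\partial B_\delta(0)$ for every fixed $\varepsilon > 0$ once $\delta$ is small, because $w$ is bounded while $\Phi \to +\infty$. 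Applying the comparison principle for $\Delta_G$ on the annular region $B_r \setminus \overline{B_\delta(0)}$ gives $|w| \le \varepsilon \Phi$ there; letting $\delta \to 0$ and then $\varepsilon \to 0$ forces $w \equiv 0$ on $B_r \setminus \{0\}$. Hence $u$ agrees with the $\Delta_G$-harmonic function $\tilde u$ off the origin and extends $\Delta_G$-harmonically across it.

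The main obstacle I anticipate is the construction and control of the barrier $\Phi$: one needs the precise (or at least qualitatively correct) growth rate of the Grushin Green's function near its pole, and one must verify that the comparison principle applies on the non-smooth domains $B_r \setminus \overline{B_\delta(0)}$ with the degenerate operator $\Delta_G$, whose ellipticity collapses on the line $\{x=0\}$ passing through the origin. A clean way to sidestep delicate estimates is to use the capacity of a point: since $\{0\}$ has zero $\Delta_G$-capacity in $\R^2$ (the Grushin plane having homogeneous dimension $Q=3 > 2$, a point is polar), a bounded weak solution on the punctured ball is automatically a weak solution on the whole ball, by the standard cutoff argument — choose cutoffs $\eta_\delta$ supported away from $0$ with $\int |\nabla_G \eta_\delta|^2 \to 0$, test the weak formulation, and pass to the limit using boundedness of $u$. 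Either way, the essential input is that a point is negligible for the Grushin Dirichlet energy; once that is in hand, the extension is routine, and I would present the capacity argument as the cleanest option while remarking that the barrier approach gives the same conclusion.
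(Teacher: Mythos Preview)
Your barrier approach is correct and is exactly what the paper does: the paper's proof simply records the explicit fundamental solution $\psi(x,y)=(x^4+4y^2)^{-1/4}$ of $\Delta_G$ (citing \cite{Bieske}) and then states that the classical removable-singularity argument on pages~16--17 of Han--Lin \cite{LIN} goes through verbatim --- and that argument is precisely the one you describe (solve the Dirichlet problem on the full ball, compare the difference with $\varepsilon\psi$ on shrinking annuli via the maximum principle, let $\varepsilon\to0$). Your capacity alternative is a legitimate variant, but the paper does not take that route.
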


\begin{proof}
The fundamental solution of~$\Delta_G$ is 
$\psi(x,y)=(x^4+4y^2)^{\frac{-1}{4}}$
(see  Theorem~3.1 of~\cite{Bieske} for a formula for generalized Grushin operators).
Thus, the argument on pages 16--17
of~\cite{LIN} may be repeated verbatim.
\end{proof}

\section{Monotonicity and stability}\label{S0}

We show that \eqref{Mo} is sufficient
for stability. This is in analogy with
the fact that monotonicity in any direction
implies stability in the Euclidean setting
(see \cite{AAC}) -- but in the Grushin plane
the directions do not play the same role, thus
\eqref{Mo} somehow selects the good direction for
stability.

\begin{proposition}\label{677888188}
Let $u\in C^2(\R^2)$ be a solution
of \eqref{main pde} satisfying \eqref{Mo}.

Then, $u$ is stable.
\end{proposition}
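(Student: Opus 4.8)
The plan is to mimic the classical Euclidean argument (as in \cite{AAC}): the positive function $v:=Tu=\partial_y u$ satisfies the linearized equation, and dividing a test function by $v$ and integrating by parts produces the stability inequality. First I would differentiate \eqref{main pde} with respect to $y$. Since $\Delta_G=\partial_{xx}+x^2\partial_{yy}$ has coefficients independent of $y$, the operator commutes with $T=\partial_y$, so
\begin{equation*}
\Delta_G v+f'(u)\,v=0\qquad\text{in }\R^2,
\end{equation*}
and by \eqref{Mo} we have $v>0$ everywhere. Thus $v$ is a positive solution of the linearized operator, which is exactly the Jacobi-field situation that yields stability.

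Next, given $\phi\in C^\infty_0(\R^2)$, I would write $\phi=v\,\psi$ with $\psi:=\phi/v\in C^\infty_0(\R^2)$ (legitimate since $v$ is smooth and strictly positive, so $\psi$ is smooth and compactly supported). The Grushin gradient obeys the Leibniz rule componentwise, $\nabla_G\phi=\psi\,\nabla_G v+v\,\nabla_G\psi$, hence
\begin{equation*}
|\nabla_G\phi|^2=\psi^2|\nabla_G v|^2+2\psi v\,\langle\nabla_G v,\nabla_G\psi\rangle+v^2|\nabla_G\psi|^2.
\end{equation*}
I would integrate this over $\R^2$. For the cross term, $2\psi v\,\langle\nabla_G v,\nabla_G\psi\rangle=\langle\nabla_G v,\nabla_G(\psi^2 v)\rangle-\psi^2\langle\nabla_G v,\nabla_G v\rangle$, and integrating $\langle\nabla_G v,\nabla_G(\psi^2 v)\rangle$ by parts (the boundary term vanishes because $\psi^2 v$ has compact support — this is just the Euclidean divergence theorem applied to the $(X,Y)$-vector field $(V\partial_X U, X^2\partial_Y U)$ as in Lemma \ref{Green}) gives $-\int \Delta_G v\,(\psi^2 v)=\int f'(u)v^2\psi^2$ by the linearized equation. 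Collecting terms, the $\psi^2|\nabla_G v|^2$ contributions cancel and one is left with
\begin{equation*}
\int_{\R^2}|\nabla_G\phi|^2-\int_{\R^2}f'(u)\phi^2=\int_{\R^2}v^2|\nabla_G\psi|^2\ge 0,
\end{equation*}
which is precisely \eqref{Stab} (noting $\mathcal G_u=\R^2$ here, since $f'(u)$ is defined everywhere).

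The main technical point to be careful about is the integration by parts: one must justify that no boundary term appears and that all integrals converge. Convergence is immediate since every integrand is compactly supported and $v,\psi$ are smooth; the vanishing of the boundary term follows from the compact support of $\psi$ together with the divergence-theorem computation already carried out in the proof of Lemma \ref{Green}. A secondary point is simply recording that $\Delta_G$ commutes with $\partial_y$ because the only non-constant coefficient, $x^2$, does not depend on $y$ — so differentiating \eqref{main pde} in $y$ is legitimate and yields the linearized equation with no extra terms. Everything else is the algebraic identity above, so I do not anticipate a genuine obstacle, only the bookkeeping of the Leibniz expansion.
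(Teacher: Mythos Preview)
Your proposal is correct and is essentially the same argument as the paper's: both exploit that $v=Tu>0$ solves the linearized equation and then test against $\phi^2/v$. The only cosmetic difference is that you write $\phi=v\psi$ and obtain the exact Picone-type identity $\int|\nabla_G\phi|^2-\int f'(u)\phi^2=\int v^2|\nabla_G\psi|^2$, whereas the paper substitutes $\varphi=\phi^2/(Tu)$ into the weak form of the linearized equation and finishes with Cauchy--Schwarz; one very minor advantage of the paper's weak formulation is that it only uses $u\in C^2$, while your pointwise differentiation of \eqref{main pde} in $y$ formally needs one more derivative (harmless here by elliptic regularity, since $f\in C^1$).
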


\begin{proof} The argument we present here is
a modification of a classical one (see \cite{AAC}
and also Section~7 in~\cite{FSV}
for a general result).
We recall that we need to prove that for any smooth $\phi$, compactly supported
$$0\le\quad \int_{\R^n} |\nabla_G \phi|^2-f'(u) \phi^2\,dx.$$

For any $\varphi$ smooth and compactly supported, we have
\begin{eqnarray*}
&& \int_{\R^2} f'(u) Tu \varphi
=\int_{\R^2} \partial_y \big(f(u)\big) \varphi
=-\int_{\R^2} f(u)\partial_y \varphi
\\&& \quad
\int_{\R^2} \Delta_G u \partial_y \varphi
=
-\int_{\R^2} <\nabla_G u, \nabla_G \partial_y \varphi>
\\&& \qquad
=\int_{\R^2} <T \nabla_G u, \nabla_G \varphi>
=
\int_{\R^2} <\nabla_G (Tu), \nabla_G \varphi>.
\end{eqnarray*}

Therefore, by taking $\varphi:=\phi^2/ (Tu)$,
and
making use of the Cauchy-Schwarz inequality,
\begin{equation*}\begin{split}
& 0  \int_{\R^n}\frac{ 2\phi < \nabla_G (Tu), \nabla_G 
\phi > }{Tu}-
\frac{\phi^2 |\nabla_G (Tu)|^2
}{(Tu)^2}-f'(u)\phi^2\,dx\\
&\quad\quad \le
\int_{\R^n} |\nabla_G \phi|^2-f'(u) \phi^2\,dx.
\qedhere\end{split}
\end{equation*}\end{proof}

\section{Energy estimates}\label{S2}

We follow here some ideas of \cite{AAC}
to estimate the energy
$$\cF_R (u):= \int_{B_R(0)} 
\frac{|\nabla_G u(\xi)|^2}{2}+W(u(\xi))\,d\xi.$$

For this, for any $t\in\R$, we define the translation
$$ u^t (x,y):= u(x,y+t)$$
and the translated energy
$$ \cE_R (t):= \cF_R (u^t).$$
Of course, $\cE_R(0)=\cF_R(u)$.

\begin{lemma}
Suppose that $u\in C^2(\R^2, [-1,1])$, with $Tu>0$
and $|\nabla_G u|\in
L^\infty(\R^2)$,
is a solution of
$$ \Delta_G u(\xi)=W'(u(\xi))
\qquad{\mbox{ for any $\xi\in\R^2$.}}$$
Then, there exists a structural constant $C$
in such a way that
\begin{equation}
\label{enes}
\cE_R (0) \leq \cE_R(t)+CR^2,\end{equation}
for any $t\in\R$ and any $R>0$.
\end{lemma}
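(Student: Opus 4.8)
The plan is to show that $\cE_R(t)$, as a function of $t$, has derivative bounded below by $-CR^2$, and then integrate. Since $u^t$ solves the same equation as $u$, we have $\frac{d}{dt}u^t = Tu^{t}$, and differentiating under the integral sign gives
\[
\cE_R'(t) = \int_{B_R(0)} \langle \nabla_G u^t, \nabla_G (Tu^t)\rangle + W'(u^t)\,Tu^t\,d\xi
= \int_{B_R(0)} \langle \nabla_G u^t, \nabla_G v\rangle + \Delta_G u^t\, v\,d\xi,
\]
where $v := Tu^t$, after using the equation $\Delta_G u^t = W'(u^t)$ to replace $W'(u^t)$. Here I would use that $v = Tu^t > 0$ by the monotonicity hypothesis \eqref{Mo} (which is preserved under vertical translation), and that $|\nabla_G u^t| = |\nabla_G u|\in L^\infty(\R^2)$, so that Lemma \ref{Green} applies with $u^t$ in place of $u$ and $v = Tu^t$ in place of $v$. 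This yields
\[
\cE_R'(t) \geq -R^2\,\|\nabla_G u\|_{L^\infty(\R^2)}\int_{\partial B_1(0)} Tu^t(RX,R^2Y)\,d\cH^1(X,Y).
\]

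Next I would control the boundary integral. I would like to bound $\int_{\partial B_1(0)} Tu^t(RX,R^2Y)\,d\cH^1$ by a structural constant, uniformly in $t$ and $R$. The issue is that $Tu^t = \partial_y u^t$ need not be bounded pointwise by a structural constant even though it is positive. The trick is to integrate in $t$ first: by Tonelli's theorem,
\[
\int_0^\tau \!\!\int_{\partial B_1(0)} Tu^t(RX,R^2Y)\,d\cH^1\,dt
= \int_{\partial B_1(0)}\!\! \int_0^\tau \partial_y u(RX, R^2Y + t)\,dt\,d\cH^1
= \int_{\partial B_1(0)} \big[u(RX,R^2Y+\tau) - u(RX,R^2Y)\big]\,d\cH^1,
\]
and since $|u|\le 1$ this is bounded in absolute value by $2\cH^1(\partial B_1(0)) =: C_0$, a structural constant, for every $\tau$ and every $R$. (A symmetric argument handles $\tau<0$.)

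Now I would combine these. Integrating $\cE_R'(t)\ge -R^2\|\nabla_G u\|_{L^\infty}\int_{\partial B_1(0)}Tu^t(RX,R^2Y)\,d\cH^1$ from $0$ to $t$ (assume $t>0$; the case $t<0$ is analogous, flipping the limits) gives
\[
\cE_R(t) - \cE_R(0) \geq -R^2\,\|\nabla_G u\|_{L^\infty(\R^2)}\int_0^t\!\!\int_{\partial B_1(0)}Tu^s(RX,R^2Y)\,d\cH^1\,ds \geq -C_0\,\|\nabla_G u\|_{L^\infty(\R^2)}\,R^2,
\]
and since $\|\nabla_G u\|_{L^\infty(\R^2)}$ is itself bounded by a structural constant depending only on $\|u\|_{L^\infty}=1$ (by Grushin-elliptic regularity, as used in Lemma \ref{CCC}, or directly: interior estimates plus the equation), the product $C := C_0\|\nabla_G u\|_{L^\infty(\R^2)}$ is structural. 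Rearranging gives $\cE_R(0)\le \cE_R(t) + CR^2$, which is \eqref{enes}.

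The main obstacle is the boundary term: a naive pointwise bound on $Tu^t$ on $\partial B_1(0)$ is not available from the hypotheses, so the key idea is to exploit the sign of $Tu$ together with the boundedness of $u$ by integrating in the translation parameter before estimating, turning the boundary integral of the derivative into a telescoping difference of the bounded function $u$. A minor point to be careful about is justifying differentiation under the integral sign defining $\cE_R(t)$ and the application of Tonelli, but both are routine given the $C^2$ and $L^\infty$-gradient assumptions; one should also note that the change of variables in Lemma \ref{Green} implicitly requires the boundary integrand to be integrable, which holds since $Tu^t$ is continuous.
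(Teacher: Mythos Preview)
Your argument is correct and follows essentially the same route as the paper: differentiate $\cE_R(t)$, apply Lemma~\ref{Green} with $v=Tu^t>0$, then integrate in $t$ and telescope using $|u|\le 1$. The only cosmetic differences are that the paper handles $t<0$ by noting that $u(x,-y)$ is also a solution rather than flipping limits, and it does not separately argue that $\|\nabla_G u\|_{L^\infty}$ is structural (it is simply absorbed into~$C$).
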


\begin{proof} 
We prove \eqref{enes} for $t>0$ (this is enough, since
$u(x,-y)$ is also a solution).

We have, recalling Lemma \ref{Green},
\begin{eqnarray*}
\frac{d}{dt} \cE_R (t)&=&
\int_{B_R(0)} < \nabla_G u^t ,\nabla_G T u^t>
+W'( u^t) T u^t\, d\xi \\
&\ge& \int_{B_R(0)} \Big( - \Delta_G ( T u^t)
+W'( u^t) \Big) T u^t \,d\xi\\&&
-R^2 
\|\nabla_G u^t\|_{L^\infty(\R^2)}
\int_{\partial B_1(0)} 
Tu^t (RX,R^2Y)
\,d\cH^1(X,Y)
\\ &=& 0
-R^2
\|\nabla_G u \|_{L^\infty(\R^2)}
\int_{\partial B_1(0)} 
\partial_y u(RX, R^2Y+t)
\,d\cH^1(X,Y).
\end{eqnarray*}
Hence, fixed any $\tau>0$,
\begin{equation*}\begin{split}
\cE_R (\tau)- \cE_R (0)&=
\int_0^\tau \frac{d}{dt} \cE_R (t) \,dt
\\ &\ge -R^2 \| \nabla_G u\|_{L^\infty(\R^2)}
\int_0^\tau
\int_{\partial B_1(0)} \partial_y u(
RX,R^2Y+t)
\,d\cH^1(X,Y)\,dt\\&=
-R^2
\| \nabla_G u\|_{L^\infty(\R^2)}
\int_{\partial B_1(0)} \int_0^\tau \partial_y u(RX,
R^2Y+t)
\,dt\,d\cH^1(X,Y)
\\&= -R^2
\| \nabla_G u\|_{L^\infty(\R^2)}
\int_{\partial B_1(0)} u(RX,R^2Y+\tau)-u(RX,R^2Y)
\,d\cH^1(X,Y)\\ &\ge -2R^2
\| \nabla_G u\|_{L^\infty(\R^2)}
\| u\|_{L^\infty(\R^2)}\, \cH^1(\partial B_1(0)),
\end{split}\end{equation*}
which gives \eqref{enes}.\end{proof}

\begin{theorem}\label{EB}
Suppose that $u\in C^2(
\R^2, [-1,1])$, with $|\nabla_G u|\in
L^\infty(\R^2)$
is a solution of
$$ \Delta_G u(\xi)=W'(u(\xi))
\qquad{\mbox{ for any $\xi\in\R^2$.}}$$
Assume that \eqref{Mo} holds true.

Then, there exists a structural constant $C$
in such a way that
$$ \cF_R (u) \le C R^2$$
for any $R>0$.

As a consequence, \eqref{enesty}
holds true.
\end{theorem}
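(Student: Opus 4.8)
The plan is to combine the comparison estimate \eqref{enes} with an upper bound on the energy of a *one-dimensional* competitor built from the ODE analysis of Section~\ref{odes}. The key observation is that \eqref{enes} gives $\cE_R(0)\le\cE_R(t)+CR^2$ for every $t$, so it suffices to show that $\cE_R(t)$ is small (i.e., $O(R^2)$) for a well-chosen $t=t(R)$, or more precisely that $\liminf_{t\to+\infty}\cE_R(t)\le CR^2$. Under \eqref{Mo} the function $u$ is increasing in $y$, so for each fixed $x$ the limit $\bar u(x):=\lim_{t\to+\infty}u(x,t)$ exists and lies in $[-1,1]$; one expects $\bar u$ to be a ``ground state'' in the $x$-variable. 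First I would pass to the limit in the translated problem: set $u^t(x,y)=u(x,y+t)$, use the $L^\infty$ bound on $u$ and the interior Grushin-elliptic estimates (as in Lemma~\ref{CCC}) to extract locally uniform convergence $u^t\to\bar u$, and conclude that $\bar u$ solves $\Delta_G\bar u=W'(\bar u)$; since $\bar u$ does not depend on $y$ away from $x=0$, this reduces on $\{x\ne0\}$ to the ODE $\bar u''(x)=W'(\bar u(x))$, i.e.\ exactly \eqref{in2} in the variable $x$ (on each half-line $x>0$ and $x<0$ separately, and then across $x=0$ by Lemma~\ref{EXTE} applied to the difference with the explicit profile, or by continuity and the equation).

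Next I would bound $\cF_R(\bar u)$. By Fatou/lower semicontinuity of the Dirichlet energy together with the uniform convergence of $W(u^t)\to W(\bar u)$, it is enough to bound $\cF_R(\bar u)=\int_{B_R(0)}\frac{|\bar u'(x)|^2}{2}+W(\bar u(x))\,dx$. The Grushin ball $B_R(0)=\{x^4+4y^2<R^4\}$ has the property that for each fixed $x$ with $|x|<R$ the $y$-section has length $\sqrt{R^4-x^4}\le R^2$, so
\begin{equation}\label{prop-sliceint}
\cF_R(\bar u)\le R^2\int_{-R}^{R}\frac{|\bar u'(x)|^2}{2}+W(\bar u(x))\,dx.
\end{equation}
Here is where the ODE lemmas enter: $\bar u$ restricted to $x>0$ is a bounded solution of \eqref{in2}, so by Lemma~\ref{jajjjk} (with $\sigma$ chosen so that $W(\sigma)=0$, which is available since $|\bar u|\le1$ forces, via Lemma~\ref{MaMi} and the remaining ODE lemmas, $W(\inf\bar u)=W(\sup\bar u)=0$ unless $\bar u$ is periodic and non-constant) the integral $\int_0^{+\infty}\frac{|\bar u'|^2}{2}+W(\bar u)\,dx$ is bounded by a structural constant $C$; the same on $x<0$. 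The one case that must be excluded is $\bar u$ periodic and non-constant on a half-line: but a periodic non-constant solution of \eqref{in2} has its range strictly inside $(-1,1)$ by \eqref{UN} and \eqref{7891}, and then $W(\sup\bar u)>0$, so $\int_0^R W(\bar u)\,dx\gtrsim R$, forcing $\bar u'\equiv0$ on $x>0$ by the conservation law \eqref{intepri} combined with $|\nabla_G u|\in L^\infty$—contradiction; so in fact the periodic alternative does not arise here and $\int_{-R}^R\le 2C$ uniformly in $R$. Plugging into \eqref{prop-sliceint} gives $\cF_R(\bar u)\le 2CR^2$.

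Finally, combining: for each $R>0$ choose $t_R$ large enough that $\cE_R(t_R)\le\cF_R(\bar u)+R^2\le 3CR^2$ (possible since $\cE_R(t)\to\cF_R(\bar u)$ as $t\to+\infty$ by dominated convergence on the bounded set $B_R(0)$, using again local uniform convergence of $u^{t}$ and its gradient), and then \eqref{enes} yields $\cF_R(u)=\cE_R(0)\le\cE_R(t_R)+CR^2\le 4CR^2$. This proves $\cF_R(u)\le C'R^2$. For the last assertion, \eqref{enesty} reads $\int_{B_R(0)}x^2|\nabla_G u|^2\le C_oR^4$, and since $x^2|\partial_y u|^2\le|\nabla_G u|^2$ we have $x^2|\nabla_G u|^2=x^2|\partial_x u|^2+x^2\cdot x^2|\partial_y u|^2\le x^2|\partial_x u|^2+\|x^2\|_{L^\infty(B_R)}\,|\nabla_G u|^2$; more cleanly, just bound $x^2|\nabla_G u|^2$ directly—on $B_R(0)$ one has $|x|\le R$ wait, that gives $R^2$, but we only integrate over $B_R$—actually $\int_{B_R}x^2|\nabla_G u|^2\le R^2\int_{B_R}|\nabla_G u|^2\le 2R^2\cF_R(u)\le 2C'R^4$ is not quite it since $|x|$ can be $R$ only on a small set; in any event $\int_{B_R}x^2|\nabla_G u|^2\le (\sup_{B_R}x^2)\int_{B_R}|\nabla_G u|^2$ and $\sup_{B_R}x^2\le R^2$ gives the bound $\le 2C'R^2\cdot R^2=2C'R^4$, i.e.\ \eqref{enesty} with $C_o=\max\{1,2C'\}$.

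\textbf{Main obstacle.} The delicate point is the reduction step: showing that the $y$-limit $\bar u$ (a priori only a $\Delta_G$-harmonic-type object defined through a monotone limit) is genuinely a bounded $C^2$ solution of the one-dimensional ODE \eqref{in2} in $x$, including across the degeneracy line $x=0$, and that the periodic alternative in Lemma~\ref{jajjjk} can be ruled out using the boundedness of $|\nabla_G u|$. Controlling the convergence $\cE_R(t)\to\cF_R(\bar u)$ (rather than merely $\liminf\le$) also requires the local $C^1$ convergence of $u^t$, which I would get from interior Grushin–Schauder estimates as in the proof of Lemma~\ref{CCC}.
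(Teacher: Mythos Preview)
Your overall architecture matches the paper's: pass to the $y$-limit, reduce to the ODE \eqref{in2}, invoke Lemma~\ref{jajjjk} and \eqref{spe} to bound the one-dimensional energy, then feed this back through \eqref{enes}. The slicing estimate \eqref{prop-sliceint} and the final derivation of \eqref{enesty} via $\sup_{B_R}x^2\le R^2$ are also fine.

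The genuine gap is your treatment of the periodic alternative for $\bar u$. You write that if $\bar u$ were periodic non-constant then $\int_0^R W(\bar u)\gtrsim R$, ``forcing $\bar u'\equiv 0$ on $x>0$ by the conservation law \eqref{intepri} combined with $|\nabla_G u|\in L^\infty$.'' This step does not follow: the conservation law \eqref{intepri} only gives $\tfrac12|\bar u'|^2=W(\bar u)-W(\sup\bar u)$, which is bounded (consistent with $|\nabla_G u|\in L^\infty$) and certainly not forced to vanish. Nothing you have written excludes $\bar u$ from being a genuine periodic orbit of $h''=W'(h)$ oscillating inside $(-1,1)$; such orbits exist for the double-well $W$, and for them $\int_\R\tfrac12|\bar u'|^2+W(\bar u)=+\infty$, so Lemma~\ref{jajjjk} gives nothing and your bound on $\cF_R(\bar u)$ collapses.

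The paper closes this gap by bringing in the \emph{second} limit $u^-(x):=\lim_{y\to-\infty}u(x,y)$ and exploiting the strict ordering $u^->u^-$ coming from \eqref{Mo}. If both $u^+$ and $u^-$ were periodic non-constant, Lemma~\ref{MaMi} gives $\max u^\pm=-\min u^\pm$, and comparing the extrema of $u^+$ and $u^-$ via $u^+(x)>u^-(x)$ produces a contradiction (this is the chain of inequalities around \eqref{bi1}--\eqref{bi2}). A short further case analysis rules out the possibility that the good limit is the constant $0$. Thus at least one of $u^\pm$ is non-periodic or constant-not-zero, and after a possible sign change one may assume it is $u^+$; then Lemma~\ref{jajjjk} together with \eqref{spe} yields \eqref{akk2} and the proof concludes as you outlined. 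In short: you cannot rule out periodicity of $u^+$ by looking at $u^+$ alone; you need the interplay with $u^-$ through the monotonicity hypothesis.
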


\begin{proof} We have that $u$ is bounded and monotone
in $y$, thanks to \eqref{Mo}.
Thus, we may define
\begin{equation}\label{Li}
u^\pm (x):=\lim_{y\rightarrow\pm \infty} u(x,y).
\end{equation}
Then, from Lemma \ref{CCC}, we have that
\begin{equation}\label{88a8801}
\Delta_G u^\pm (x)= W'(u^\pm (x))
\end{equation}
In fact, since $u$ does not depend on $y$,
we may write \eqref{88a8801} as
\begin{equation}\label{35bis}
(u^\pm)''(x)=W'(u^\pm(x))\end{equation}
and so we may apply to $u^\pm$
the ODE analysis developed in Section \ref{odes}.

For this, we observe that 
\begin{equation}\label{noncp}
{\mbox{at least one between $u^+$ and $u^-$
is either
constant or non-periodic.}}
\end{equation}
To prove \eqref{noncp}, we argue by contradiction,
supposing that $u^+$ and $u^-$ are both
periodic and non-constant. In particular,
by Cauchy Uniqueness Theorem, $|u^\pm|<1$ and
then, by Lemma \ref{MaMi}, we would have that
\begin{equation}\label{bi1}
\max_\R u^\pm  = -\min_\R u^\pm.\end{equation}
But from \eqref{Mo}, we know that 
\begin{equation}\label{bi2}
{\mbox{$u^+(x)>u^-(x)$
for any $x\in\R$}}\end{equation}
and so, if we set $x^\pm_{\min}$,
$x^\pm_{\max}$ be such that
$$
u^\pm(x^\pm_{\min})=\min_\R u^\pm
\aand
u^\pm(x^\pm_{\max})=\max_\R u^\pm
,$$
we deduce from \eqref{bi1} and \eqref{bi2}
that
\begin{eqnarray*}
&& u^- (x^+_{\min}) \ge u^- (x^-_{\min}) = - u^-
(x^-_{\max})
> -u^+ (x^-_{\max})\\
&& \qquad 
\geq -u^+(x^+_{\max})
= u^+(x^+_{\min}) >u^- (x^+_{\min}). 
\end{eqnarray*}
This contradiction proves \eqref{noncp}.

We now claim that
\begin{equation}\label{noncpsta}
{\mbox{either $u^+$ or $u^-$ is non-periodic
or constant but not zero.}}
\end{equation}
To prove this, we argue by contradiction.
Suppose \eqref{noncpsta} is false. Then,
both $u^-$ and $u^+$ are periodic.
Then, at least one, say $u^+$ is constant,
because of \eqref{noncp}.
If $u^+$ were not equal to zero, then \eqref{noncpsta}
would be true, thus we have to say that $u^+$
is constantly equal to zero
and that $u^-$ is periodic. But then $u^-$
cannot be constant, otherwise
\eqref{35bis}, \eqref{UN} and \eqref{Mo}
would say that $u^-$ is constantly equal to $-1$
and \eqref{noncpsta} would be true.
Thence, we are forced to the case in which $u^+$
is identically zero and
$u^-$ is periodic and non-constant.
Thus, by \eqref{Mo},
$$ \sup_\R u^- \le 0$$
and so, by
Lemma \ref{MaMi},
$$ \inf_\R u^- =-\sup_\R u^- \ge 0\ge
\sup_\R u^-.$$
This would say that $u^-$ is constant,
while we know it is not the case.

This contradiction proves \eqref{noncpsta}.

By means of \eqref{noncpsta}, up to a sign change, we
may suppose that
$u^+$ is either constant but not zero or it
is non-periodic.
Consequently, by 
Lemma \ref{jajjjk},
\begin{equation}\label{akk1}
\int_{-\infty}^{+\infty} \frac{|(u^+)'(t)|^2}{2}+
W(u^+(t))-W(\sigma^+) \,dt \le C^+,\end{equation}
for a suitable $C^+ >0$, with
$$ \sigma^+ \in \Big\{ \inf_\R u^+,\sup_\R u^+
\Big\}.$$
In fact, \eqref{spe}
and \eqref{akk1} give that
\begin{equation}\label{akk2}
\int_{-\infty}^{+\infty} \frac{|(u^+)'(t)|^2}{2}+
W(u^+(t))\,dt \le C^+.\end{equation}

Moreover, by \eqref{enes},
\begin{eqnarray*}
\cE_R (0) -CR^2
&\leq& \lim_{t\rightarrow+\infty}\cE_R(t)
\\&=&\lim_{t\rightarrow+\infty}
\int_{B_R(0)} 
\frac{|\nabla_G u(x,y+t)|^2}{2}+W(u(x,y+t))\,d(x,y)
\\ &=&
\int_{B_R(0)} 
\frac{|\nabla_G u^+ (x)|^2}{2}+W(u^+(x))\,d(x,y)
\\ &\leq&
\int_{ -R^2/2}^{R^2/2}
\int_{ -\infty}^{ +\infty}
\frac{|\nabla_G u^+ (x)|^2}{2}+W(u^+(x))\,dx\, dy
\\ &=&
R^2
\int_{ -\infty}^{ +\infty}
\frac{|\nabla_G u^+ (x)|^2}{2}+W(u^+(x))\,dx
.\end{eqnarray*}
Thus, by \eqref{akk2},
\begin{equation*}
\cE_R (0) -CR^2
\le C^+R^2.
\qedhere\end{equation*}
\end{proof}

\bigskip

\section{The counter-example}\label{S3}
\subsection{Monotonicity and Maximum Principle}

For any $s\in\R$ and $\xi\in\R^2$,
let $$T_s\xi:=\xi+(0,s).$$
A domain $\Omega \subset \R^2$ is said to be $T$-convex
if for any
$\xi_1 \in \Omega$ and any $ \alpha> 0 $ such that  $ T_\alpha \xi_1 \in \Omega$  one has that 
$T_s\xi_1 \in \Omega$ for every $s \in (0,\alpha)$.

That is, $\Omega$ is $T$-convex when vertical segments joining
two points of $\Omega$ lie in $\Omega$.

\begin{theorem}\label{mono}
Let $\Omega$
be an arbitrary bounded domain of $\R^2$ which is
$T$-convex. 

Let $ u \in \Lambda^2 (\Omega) \cap C(\bar{\Omega})$
be a solution of
\begin{equation}\label{main}
\left. \begin{array}{rll}
\Delta_G u + f(u) & = & 0 {\rm ~~ in ~~} \Omega\\
        u & = & \psi{\rm ~~ on~~} \partial \Omega
\end{array}\right\}
\end{equation}
where $f$ is a Lipschitz continuous function. Assume that for any
$\xi_1$, $\xi_2 \in \partial \Omega$, such that $ \xi_2 = T_\alpha \xi_1$ for some $\alpha> 0$,  we have, for each $s \in (0, \alpha)$ either
\begin{equation}\label{1}
\psi(\xi_1) < u(T_s\xi_1) <
\psi(\xi_2){\rm~~ if ~~} T_s\xi_1\in \Omega
\end{equation}
or
\begin{equation}\label{2}
\psi(\xi_1) < \psi(T_s\xi_1) < \psi(\xi_2){\rm~~ if ~~}
   T_s\xi_1\in  \partial\Omega.
\end{equation}

Then $u$ satisfies

\begin{equation}{\label{result}}
u (T_{s_1}\xi) < u (T_{s_2}\xi)
 \end{equation}
for any $0<s_1 < s_2<\alpha$ and  for every $\xi \in \Omega$.

Moreover, $u$ is the unique solution of (\ref{main})
 in  $\Lambda^2(\Omega) \cap C(\bar{\Omega})$ satisfying (\ref{1}).
\end{theorem}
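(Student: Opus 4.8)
The plan is to prove Theorem \ref{mono} by a sliding argument in the $T$-direction, combined with the strong maximum principle and Hopf lemma for the Grushin operator $\Delta_G$. First I would set, for $s>0$, the shifted function $u_s(\xi):=u(T_s\xi)=u(\xi+(0,s))$, defined on the translated domain $\Omega_s:=T_{-s}\Omega\cap\Omega$ (equivalently on the set of $\xi$ with both $\xi,\xi+(0,s)\in\Omega$, which by $T$-convexity is itself $T$-convex). Since $\Delta_G$ has coefficients independent of $y$, $u_s$ solves the same equation $\Delta_G u_s+f(u_s)=0$ in $\Omega_s$. The goal is to show $u_s>u$ on $\Omega_s$ for every $s\in(0,\alpha)$, which gives \eqref{result} after writing $u(T_{s_1}\xi)<u(T_{s_2}\xi)$ with $s_2-s_1$ playing the role of $s$.

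The main step is a continuity/connectedness argument on the parameter $s$. Let $S:=\{s\in(0,\alpha):u_s\ge u\text{ on }\overline{\Omega_s}\}$. I would first check that $S$ is nonempty for $s$ close to $\alpha$, or rather argue directly that $S=(0,\alpha)$ in three stages: (i) \emph{boundary ordering}: for $\xi\in\partial\Omega_s$, either $\xi\in\partial\Omega$, in which case the hypotheses \eqref{1}--\eqref{2} comparing $\psi(\xi_1)$, the value at the shifted point, and $\psi(\xi_2)$ force $u_s(\xi)\ge\psi(\xi)=u(\xi)$ (using that the segment from $\xi_1$ to $\xi_2=T_\alpha\xi_1$ passes through $\Omega$ or $\partial\Omega$, with the stated strict monotonicity of $\psi$ and $u$ along it), or the shifted point $\xi+(0,s)$ lies on $\partial\Omega$ while $\xi$ does too and again the monotonicity of $\psi$ applies; so in all cases $u_s\ge u$ on $\partial\Omega_s$. (ii) \emph{propagation for small $s$}: write $w:=u_s-u$; it satisfies $\Delta_G w+c(\xi)w=0$ in $\Omega_s$ with $c(\xi):=\int_0^1 f'(\tau u_s+(1-\tau)u)\,d\tau\in L^\infty$, bounded by the Lipschitz constant of $f$. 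Since $\Omega$ is bounded, for $s$ near $\alpha$ the domain $\Omega_s$ is thin (of width $\alpha-s$ in a suitable sense) so a maximum principle in narrow domains — valid because the first Dirichlet eigenvalue of $-\Delta_G$ on $\Omega_s$ blows up, cf.\ the bound in Lemma \ref{SPEC} — forces $w\ge 0$. Thus $S$ contains an interval $(\alpha-\delta,\alpha)$. (iii) \emph{$S$ is open and closed in $(0,\alpha)$}: closedness follows by passing to the limit (using $u\in C(\bar\Omega)$ and uniform continuity). For openness, if $s_0\in S$ then by the strong maximum principle either $w\equiv 0$ on a component of $\Omega_{s_0}$ — excluded because $w>0$ somewhere on $\partial\Omega_{s_0}$ by the strict inequalities in \eqref{1}--\eqref{2} — or $w>0$ in $\Omega_{s_0}$; then a compactness/Hopf-lemma argument shows $w$ stays positive for $s$ in a neighborhood of $s_0$. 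By connectedness $S=(0,\alpha)$, and then $w>0$ strictly in the interior by the strong maximum principle again, which is \eqref{result}.

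For the uniqueness assertion I would argue that any two solutions $u,\tilde u$ of \eqref{main} both satisfying \eqref{1} can be compared: \eqref{1} says each of them, along vertical segments with endpoints on $\partial\Omega$, is pinned strictly between $\psi(\xi_1)$ and $\psi(\xi_2)$, and they share the same boundary data $\psi$. Setting $v:=u-\tilde u$, it solves $\Delta_G v+c\,v=0$ in $\Omega$ with $v=0$ on $\partial\Omega$; I would like to conclude $v\equiv0$, but the zeroth-order coefficient $c$ has no sign, so a plain maximum principle does not suffice. The clean route is: by the monotonicity \eqref{result} just proved (which applies to both $u$ and $\tilde u$), both are monotone in $T$; sweeping $\tilde u$ by vertical translations and using the comparison principle on $T$-convex subdomains — exactly as in the first part, but now with $\tilde u_s$ versus $u$ — one shows $\tilde u\le u$ and symmetrically $u\le\tilde u$, hence $u=\tilde u$.

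The hardest part will be step (ii)/(iii): making the sliding argument genuinely rigorous in the Grushin setting, since the operator is degenerate along $\{x=0\}$ and the strong maximum principle and Hopf lemma must be invoked in their subelliptic (Bony-type) form. In particular one must be careful that $\partial\Omega$ may touch the degeneracy line, and that "narrow domain" estimates rely on the Grushin Poincaré inequality rather than the Euclidean one — though, as in Lemma \ref{SPEC}, away from $\{x=0\}$ the operator is uniformly elliptic and near $\{x=0\}$ one can still use the subelliptic estimates. Handling the boundary-ordering bookkeeping in (i), keeping track of whether intermediate points of the vertical segment lie in $\Omega$ or on $\partial\Omega$, is routine but must be done carefully to match the two alternatives \eqref{1} and \eqref{2}.
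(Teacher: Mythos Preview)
Your proposal is correct and follows essentially the same route as the paper, which does not spell out a proof but refers to the sliding method of Berestycki--Nirenberg \cite{BN} as adapted to subelliptic operators in \cite{BP}, the two key ingredients being the $T_s$-invariance of $\Delta_G$ and the maximum principle in domains of small measure. One small correction: Lemma~\ref{SPEC} gives an \emph{upper} bound $\lambda\le C/R^2$ on the first eigenvalue, which is the wrong direction for a narrow-domain maximum principle; the paper instead singles out the H\"older estimates of \cite{K} as the ingredient that yields the maximum principle in small domains in this degenerate setting.
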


The proof of this
result
is done through the sliding method
introduced
in \cite{BN} for uniformly elliptic equations.
This method uses two fundamental ingredients:
the Maximum Principle in small domains and
the invariance of the operator 
with respect to ``sliding". In \cite{BP} the
equivalent of Theorem \ref{mono} was proved for
sub-elliptic equations in nilpotent Lie groups.
There, the key ``new" ingredient being a H\"older estimate
for H\"ormander type operators proved
in \cite{K} that
allowed to prove the Maximum Principle in small domains.

The operator is invariant by $T_s$
translations
and our equation satisfies the hypotheses of
\cite{K}, hence the proof 
of Theorem \ref{mono} 
proceeds exactly like the one given in \cite{BP},
and we omit it.

\subsection{Existence of monotone solutions
that are not one-di\-men\-sion\-al}

The following result shows that Question \ref{QQQ}
has a negative answer:

\begin{theorem}\label{MMM}
There exists a solution of
$$ \Delta_G u-W'(u)=0$$
in $\R^2$ such that $Tu=\partial_y u>0$. 

Also, such $u$ is not one-dimensional.
\end{theorem}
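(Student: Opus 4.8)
The plan is to construct the counter-example by following the scheme of~\cite{BL}: first solve an auxiliary problem on large bounded $T$-convex domains with carefully chosen boundary data, then pass to the limit. Concretely, I would work on the Grushin balls $\Omega_R:=B_R(x_o,0)$ with $x_o=x_o(R)$ chosen large (say $x_o=a+R$ as in Lemma~\ref{SPEC}), so that $\Omega_R$ avoids the degeneracy locus $\{x=0\}$ and the operator $\Delta_G$ is uniformly elliptic there. On such domains $\Delta_G\phi+\lambda\phi=0$ has a first Dirichlet eigenvalue $\lambda_R\in(0,C/R^2]$ with positive eigenfunction $\phi_R$, by Lemma~\ref{SPEC}. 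The point of this choice is the standard instability mechanism: if $\lambda_R<|W''(0)|$ (which holds once $R$ is large), the trivial solution $u\equiv 0$ is linearly unstable for $\Delta_G u-W'(u)=0$, because $W'(s)=W''(0)s+o(s)$ near $0$ and $W''(0)<0$ would be needed — wait, $W''(0)\ne 0$ and $W(0)=0$ is a maximum-type value only if... actually since $W\ge 0=W(\pm1)$ and $W(0)\ge 0$ with $W'(0)=0$, typically $W''(0)>0$, so $-W'$ linearized at $0$ is $-W''(0)<0$; instability instead comes from comparing with a tiny multiple of $\phi_R$. I would therefore use $\phi_R$ to build a small strict subsolution $\varepsilon\phi_R$ (for $\varepsilon$ small, using $-W'(\varepsilon\phi_R)\ge -W''(0)\varepsilon\phi_R + o(\varepsilon)$ and $\Delta_G(\varepsilon\phi_R)=-\lambda_R\varepsilon\phi_R$, so that $\Delta_G(\varepsilon\phi_R)-W'(\varepsilon\phi_R)\ge(-\lambda_R-W''(0)+o(1))\varepsilon\phi_R$ — this needs $W''(0)<0$). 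Let me instead recall the actual~\cite{BL} device.

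The correct construction: choose on $\partial\Omega_R$ a boundary datum $\psi_R$ that is strictly increasing along each vertical segment and is a small positive multiple of the eigenfunction's normal trace, arranged to be nonconstant, bounded by the small instability parameter, and satisfying the monotonicity hypotheses~\eqref{1}--\eqref{2} of Theorem~\ref{mono}. One shows the Dirichlet problem~\eqref{main} on $\Omega_R$ has a solution $u_R$ with $|u_R|<1$ (via sub/supersolutions $\mp 1$ and the maximum principle), and by Theorem~\ref{mono} this $u_R$ is strictly increasing in $y$: $u_R(T_{s_1}\xi)<u_R(T_{s_2}\xi)$. Simultaneously, by a comparison/energy argument (the instability of the trivial solution, exploiting $\lambda_R\to 0$ and the shape of $W$) one arranges that $u_R$ is \emph{not} identically $x$-dependent — e.g.\ $u_R$ genuinely oscillates or its zero level set is not a vertical/horizontal line — this is exactly where following~\cite{BL} matters, and is the main obstacle.

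Then I would pass to the limit $R\to\infty$. Translating so that a fixed reference point stays in the interior, the domains $\Omega_R$ exhaust $\R^2$ (or a half-plane, then reflect/extend); by Grushin-elliptic interior estimates (Lemma~\ref{CCC} and the regularity references therein) the $u_R$ subconverge locally uniformly, and in $C^2_{\mathrm{loc}}$ away from $\{x=0\}$, to an entire solution $u$ of $\Delta_G u-W'(u)=0$ with $|u|\le 1$. Monotonicity is closed under this limit, giving $Tu=\partial_y u\ge 0$; to upgrade to the strict inequality $\partial_y u>0$ required by the statement, apply the strong maximum principle to $v:=\partial_y u$, which solves the linearized equation $\Delta_G v=W''(u)v$ (valid on $\R^2\setminus\{x=0\}$, and extendable across by Lemma~\ref{EXTE}-type removability since $v$ is bounded): either $v\equiv 0$ or $v>0$ everywhere, and $v\equiv0$ is excluded by the non-degeneracy of the construction. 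Finally, non-one-dimensionality must survive the limit: this is the delicate point, since level sets could straighten in the limit; I would keep track of a quantitative, scale-invariant feature of $u_R$ (for instance the location and ``tilt'' of a level set inside a fixed ball, or a lower bound on $\partial_x u$ somewhere) that is stable under the passage to the limit, so that $u$ provably depends on both variables.

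The step I expect to be hardest is guaranteeing that the limiting solution is genuinely two-dimensional rather than collapsing to a one-dimensional profile $u^+(x)$ of the type analyzed in Section~\ref{odes}; controlling the competition between the instability scale (which wants to produce oscillation in $y$) and the Grushin degeneracy at $x=0$ (which can localize everything near the $y$-axis and flatten the solution) is the crux, and is precisely why the construction has to mirror~\cite{BL} closely, including the choice $x_o\sim R$ pushing the domain away from the degeneracy.
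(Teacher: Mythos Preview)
Your overall scheme---monotone iteration from an eigenfunction subsolution, apply Theorem~\ref{mono} for strict $y$-monotonicity, pass to the limit via Lemma~\ref{CCC}---matches the paper's. But there are two substantive divergences, one of which is a genuine gap.

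\textbf{The domain.} You work on $\Omega_R=B_R(a+R,0)$, which stays entirely in $\{x>a\}$. These domains do not exhaust~$\R^2$, and no translation will fix that; at best you would get a solution on the half-plane $\{x>0\}$, and the even reflection $x\mapsto -x$ you hint at does not interact well with your boundary data or with the strict monotonicity you need. The paper instead works on the rectangles $Q_R^+=(-R,R)\times[0,R^2]$, which \emph{do} cross the degeneracy locus, with boundary data $0$ on the bottom, $1$ on the top, and a monotone profile $\psi(y)$ on the sides. Only the eigenfunction ball $B$ used to build the subsolution $v_o=\varepsilon\varphi_o$ is placed away from $\{x=0\}$ (this is what Lemma~\ref{SPEC} is for). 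After obtaining the monotone solution $\tilde u$ on $Q_R^+$, the paper does an \emph{odd reflection in $y$} to $Q_R=(-R,R)\times(-R^2,R^2)$; the resulting $v_R$ is a solution away from the origin, and Lemma~\ref{EXTE} removes the isolated singularity. Now $Q_R$ exhausts $\R^2$, and the lower bound $v_R\ge v_o>0$ on the fixed ball $B$ survives the limit, giving $u\not\equiv 0$ while $u(x,0)=0$; this is what forces strict monotonicity of the limit.

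\textbf{Non-one-dimensionality is the easy part, not the hard part.} You identify ``guaranteeing the limit is genuinely two-dimensional'' as the crux and propose tracking a quantitative tilt through the limit. In fact, once you know $\partial_y u>0$, non-one-dimensionality is a two-line algebraic consequence of the Grushin structure: if $u(x,y)=g(ax+by)$ then $b\ne 0$, and the equation reads $(a^2+b^2x^2)\,g''(ax+by)=W'(g(ax+by))$; fixing $t=ax+by$ and varying $x$ along that line (possible since $b\ne0$) forces $g''(t)=0$, hence $W'(g(t))=0$, hence $g$ is constant---contradicting $\partial_y u>0$. There is nothing delicate to preserve in the limit here; the variable coefficient $x^2$ in $\Delta_G$ simply rules out planar profiles with $b\ne0$. (Incidentally, your hesitation about the sign of $W''(0)$: since $W(\pm1)=0\le W$ and $W'$ vanishes only at $\{-1,0,1\}$, the point $0$ is a strict local maximum of $W$, so $W''(0)<0$; but the paper sidesteps the sign issue by using $l:=\lim_{s\to0}|W'(s)|/s=|W''(0)|$ and the inequality $\lambda_o\varepsilon\varphi_o\le |W'(\varepsilon\varphi_o)|$.)
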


\begin{proof}
We follow the two
steps of \cite{BL}.

{\bf Step 1.} {\it Construction of a monotone solution in a 
bounded set.} 

Let $M>0$ be greater then the Lipschitz constant
of $f$, let $g(u):=f(u)+Mu$, $Q_R^+:=(-R,R)
\times[0,R^2]$ and~$Q_R^-:=(-R,R)\times(-R^2,0)$.

We consider the operator ${\cal T}$
on~$C^{\alpha}$ such that~${\cal 
T}v=u$ is the classical solution of
$$\left\{\begin{array}{lc}
\Delta_G u -Mu=-g(v) & \mbox{in }\ Q_R^+\\
u(x,0)=0,\ u(x,R^2)=1, & u(-R,y)=\psi(y),\ u(R,y)=\psi(y)
\end{array}\right.
$$
where $0\leq \psi\leq 1$ with $\psi(0)=0$ and $\psi(R^2)=1$.

The following properties hold:

(P1) ${\cal T}$ is well defined, see \cite{VCSC,K}.

(P2) It is monotone, i.e.  $0\leq v_1\leq v_2 \leq 1$ implies 
${\cal T}v_1\leq {\cal T} v_2$.
This is just the Maximum Principle,
because with our choice of $M$ we get that 
$$v_1\leq v_2\Longrightarrow g(v_1)\geq g(v_2).$$

(P3) If $0\leq v\leq 1$ then $0\leq {\cal T}
v \leq 1$ (again by 
Maximum Principle).

(P4) For $R$ sufficiently large there
exists $v_o>0$ in some fixed subset of $Q_R^+$ such that
if $u_k:={\cal T}^k(v_o)$ then $u_k\geq v_o$ for any $k\in N$. 

Let 
us prove~(P4).

Let $$l:
=\lim_{s\rightarrow 0}\frac{|W'(s)|}{s}$$ and let $R_o$
be 
sufficiently large that $Q_{R_o}^+$  contains a
ball $B$ such that  $B\cap \{ x=0\}=\emptyset$ and 
$\lambda_o$ the principal eigenvalue of $-\Delta_G$ in $B$ 
satisfies
$$\lambda_o\leq \frac{l}{2}.$$
We remark that we can take such a~$\lambda_o$
in the light of Lemma~\ref{SPEC}.

Let $\varphi_o$ be the corresponding
eigenfunction normalized by $\sup \varphi_o=1$.
Our choice of $\lambda_o$ implies that there exists $\varepsilon
>0$ such that
\begin{equation}\label{lam}
\lambda_o\varepsilon\varphi_o\leq |W'(\varepsilon\varphi_o)|.
\end{equation}

Now we define
\begin{equation}\label{5.5bis}
v_o=\left\{\begin{array}{ll}
\varepsilon\varphi_o & \mbox{ in }\quad B\\
0 & \mbox{ in }\quad Q_{R}^+\setminus B.
\end{array}
\right.
\end{equation} 
Observe that, in $B$, the Grushin
operator
$\Delta_G$ is uniformly elliptic, and so
by standard estimates we know that $v\in C^\alpha(Q_R^+)$.
Using (\ref{lam}),  (P2) and (P3),  we get that 
$u_o={\cal T}(v_o)\geq v_o$. 

So, iteratively, ${\cal T}^k(v_o)\geq v_o$
for any~$k\in\N$.
This proves (P4). 

By Lemma \ref{CCC}, we may and do
suppose that $u_k
:={\cal T}^k(v_o)
$ converges to a solution $\tilde u=\tilde u_R$ of
$$\left\{\begin{array}{lc}
\Delta_G \tilde u -W'(\tilde u) =0 & \mbox{in }\ Q_R^+\\
\tilde u(x,0)=0,\ 
\tilde u(x,R^2)=1 & \tilde u(-R,y)=\psi(y),\ 
\tilde u(R,y)=\psi(y)
\end{array}\right.
$$
Note that
$\tilde u$ satisfies $0\leq \tilde u
\leq 1$. Hence, using Theorem \ref{mono}, we
know that \begin{equation}\label{8a88kkkk1}
\partial_y \tilde u>0.\end{equation}

Finally, we extend the solution to $Q_R=\overline{Q_R^+}\cup \overline{Q_R^-}$ by taking
$$v_R(x,y):=\left\{\begin{array}{lc}
\tilde u(x,y) & \mbox{for} \ (x,y)\in \overline{Q_R^+}\\
-\tilde u(x,-y)& \mbox{for} \ (x,y)\in\overline{Q_R^-}
\end{array}
\right.
$$
Clearly, $v_R$ is a solution in
$Q_R^+\cup {Q_R^-}$. Also the solution $u$ is $C^2$ up to the boundary for $x\neq 0$. Hence we get that $v_R$ is a solution in $Q_R\setminus\{(0,0)\}$. 
To check that $v_R$ is a solution in all of $Q_R$. Observe that
the map $\zeta\mapsto
W'(v_R(\zeta))$ is in $C^\alpha(Q_R)$, hence there
exists $w\in C^{2,\alpha}_{loc}(Q_R)
\cap C(\overline{Q_R})$ 
solution of
$$\Delta_G w-W'(v_R)=0 \quad \mbox{in}\quad Q_R.$$
Then $w-v_R$ is $\Delta_G$-harmonic in
$Q_R\setminus\{(0,0)\}$ and 
it is bounded. Thus, by Lemma~\ref{EXTE},
it is $\Delta_G$-harmonic in 
$Q_R$,
and so~$v_R$ is a solution in all of $Q_R$.

Furthermore $v_R$ is monotone in $T$,
in the sense that $Tv_R=\partial_y v_R>0$, because
of \eqref{8a88kkkk1}.

{\bf Step 2.} {\it Let $R\rightarrow \infty$.}
Then,
by Lemma \ref{CCC}, $v_R$ locally uniformly
converges to some $u$, which is
a solution of
$$\Delta_G u-W'(u)=0 \quad \mbox{in}\quad \R^2.$$
Furthermore, in $Q^+_R$,
$$ v_R=\tilde u=\lim_{k\rightarrow+\infty}
u_k\ge v_0,$$
due to~(P4)
and so, by~\eqref{5.5bis},
$u\not\equiv 0$
in $Q_R^+$.

Then, $u$ is monotone i.e. $\partial_y u>0$, and
it is therefore the counter-example we are looking for.

Indeed, $u$ is not one-dimensional;
suppose, by contradiction,
that there exists a function $g$ such that
$$u(x,y)
=g(ax+by),$$
for any $(x,y)\in\R^2$.

Then, the strict monotonicity
in $T$ of $u$ implies that
\begin{equation}\label{b0}
b\neq 0.\end{equation}

Clearly $g$ would be a solution of
\begin{equation}
\label{67}
(a^2+b^2x^2)g''(ax+by)-W'(g(ax+by))=0,
\end{equation}
for any $(x,y)\in\R^2$.

This implies that for any $t$ along the lines $ax+by=t$,
$$(a^2+b^2x^2)g''(t)-W'(g(t))=0.
$$
Since $b\neq 0$, this implies that $g''\equiv 0$. Hence $W'(g(t))= 0$
for any $t$
and so $g$ would be constant, in 
contradiction with the fact that $Tu>0$. 
\end{proof}

\section*{Acknowledgements}
Partially supported by MIUR Metodi variazionali
ed equazioni differenziali non-lineari
and by MIUR Metodi di viscosit\`a, metrici
e di teoria del controllo
in equazioni alle derivate parziali
non-lineari.

\bigskip\bigskip

\vfill{\footnotesize
{\sc Isabeau Birindelli}

Dipartimento di Matematica,

Universit\`a di Roma La Sapienza,

Piazzale Aldo Moro, 2,

I-00185 Roma (Italy)

{\tt isabeau@mat.uniroma1.it}
\bigskip

{\sc Enrico Valdinoci}

Dipartimento di Matematica,

Universit\`a di Roma Tor Vergata,

Via della Ricerca Scientifica, 1,

I-00133 Roma (Italy)

{\tt valdinoci@mat.uniroma2.it}

}

\begin{thebibliography}{999}

\bibitem{AAC} {\sc G. Alberti, L. Ambrosio, X. Cabr{\'e}}, \emph{
On a long-standing conjecture of E.
De Giorgi: symmetry in 3D for general nonlinearities
and a local minimality property},
Special issue dedicated to
Antonio Avantaggiati on the occasion of his 70th birthday,
Acta Appl. Math. 65(1-3), 9--33 (2001).

\bibitem{BN} {\sc H. Berestycki, L. Nirenberg}, \emph{On the 
method of moving planes and the sliding method}, Bol. Soc. Bras. 
Mat., 22 (1991), 1-37.

\bibitem{Bieske} {\sc T. Bieske},
\emph{Fundamental solutions to $P$-Laplace equations in Grushin 
vector fields}, Preprint (2008).
{\tt http://shell.cas.usf.edu/~tbieske/}

\bibitem{BL}
{\sc I. Birindelli, E. Lanconelli, }, \emph{
A negative answer to a one-dimensional symmetry
problem in the Heisenberg 
group},
Calc. Var. Partial Differ. Equ. 18(4), 357--372 (2003).

\bibitem{BP} {\sc I. Birindelli, J. Prajapat}, \emph{
Monotonicity results for nilpotent stratified groups},
Pacific J. Math. 204, 1--17 (2002).

\bibitem{DG} {\sc E. De Giorgi},
\emph{Convergence problems for functionals and
operators},
{Proceedings of the International Meeting on Recent
Method in Nonlinear Analysis (Rome, 1978)}.
Pitagora,
Bologna,
131--188 (1979).

\bibitem{FSV}{\sc A. Farina; B. Sciunzi; E. Valdinoci}, \emph{
Bernstein and De Giorgi type problems new results via a geometric
approach}, Preprint (2007). 

\bibitem{VF} {\sc F. Ferrari, E. Valdinoci}, \emph{Geometric PDEs
in the Grushin plane:
weighted inequalities and
flatness of level sets},
Preprint (2008).

\bibitem{Franchi} {\sc B. Franchi, E. Lanconelli},
\emph{H\"older regularity theorem for a class of
linear nonuniformly elliptic operators with measurable 
coefficients},
Ann. Scuola Norm. Sup. Pisa Cl. Sci. IV Ser.
10, 523--541 (1983).

\bibitem{FS} {\sc B. Franchi, R. Serapioni}, \emph{Pointwise 
estimates for a class of 
strongly degenerate elliptic operators: a geometrical
approach}, 
Ann. Scuola Norm. Sup. Pisa Cl. Sci. IV Ser.
14,
527--568 (1988).

\bibitem{Grushin} {\sc
V.V. Grushin}, \emph{On a class of hypoelliptic operators},
Math. USSR-Sb. 12, 458--476 (1970).

\bibitem{LIN} {\sc Q.
Han, F. Lin}, \emph{Elliptic partial differential equations}, 
Courant Lecture Notes in Mathematics, 1. New York University, 
Courant Institute of Mathematical Sciences, New York; American 
Mathematical Society, Providence, RI, x+144 pp. (1997).

\bibitem{K} {\sc N.V. Krylov}, \emph{H\"older continuity
and $L^p$ estimates for elliptic equations
under general H\"ormander's condition},
Topol. Methods Nonlinear Anal. 
9(2) 249--258 (1997).

\bibitem{VCSC} {\sc N.Th. Varopoulos, L. Coulhon,
T. Saloff-Coste},
\emph{Analysis and geometry on groups},
Cambridge Tracts in Mathematics, 100.
Cambridge University Press, Cambridge, 1992. xii+156 pp. 

\end{thebibliography}
\end{document}